\newtheorem{Theorem}{Theorem}[section]
\theoremstyle{plain}
\newtheorem{Example}[Theorem]{Example}
\newtheorem{Lemma}[Theorem]{Lemma}
\newtheorem{Remark}[Theorem]{Remark}
\newtheorem{Corollary}[Theorem]{Corollary}
\newtheorem{Proposition}[Theorem]{Proposition}
\numberwithin{equation}{section}
\title{Limiting distribution of particles near the frontier in the catalytic branching Brownian motion}
\author{Sergey Bocharov\footnote{S.Bocharov:  
Department of Mathematics, Zhejiang University, Zheda Road, Hangzhou 310027, China, 
e-mail: bocharov@zju.edu.cn. The author is supported by NSFC grant (No.11731012)}}
\begin{document}
\maketitle
\begin{abstract}
We consider the model of branching Brownian motion with a single catalytic point at the origin and binary branching. We establish some fine results for the asymptotic behaviour of the numbers of particles  travelling at different speeds and give an explicit characterisation of the spatial distribution of particles travelling at the critical speed.
\end{abstract}
\section{Introduction and main results}
\subsection{Description of the model}
Branching Brownian motion with a single-point catalyst at the origin is a spatial population model in which individuals (referred to as particles) move in space according to the law of standard Brownian motion and reproduce themselves at a spatially-inhomogeneous branching rate $\beta \delta_0(\cdot)$, where $\delta_0(\cdot)$ is the Dirac delta measure and $\beta > 0$ is some constant.

More precisely, in such a process we start with a single particle at some initial location $x_0 \in \mathbb{R}$ at time $0$ whose position $X_t$ at time $t \geq 0$ up until the time it dies evolves like a standard Brownian motion. At a random time $T$ satisfying
\[
P^{x_0} \Big( T>t \big\vert \big( X_s \big)_{s \geq 0} \Big) = \mathrm{e}^{- \beta L_t},
\]
where $(L_t)_{t \geq 0}$ is the local time at $0$ of $(X_t)_{t \geq 0}$, the initial particle dies and is replaced with two new particles, which independently of each other and of the previous history stochastically continue the behaviour of their parent starting from time $T$ and position $X_T = 0$. That is, they move like Brownian motions, die after random times giving birth to two new particles each, etc.

Note that informally we may write $L_t = \int_0^t \delta_0(X_s) \mathrm{d}s$ thus justifying calling the branching rate $\beta \delta_0(\cdot)$. This is made precise by the theory of additive functionals of Brownian motion. See, for example, papers of Chen and Shiozawa \cite{CS07} and Shiozawa \cite{S08}, \cite{S18}, \cite{S19} where they study a large class of processes with branching rates which are allowed to be measures.

Let us mention that in the past catalytic branching processes have also been studied in the context of superprocesses (see for example papers of Dawson and Fleischmann \cite{DF94} and Engl\"ander and Turaev \cite{ET02}) and also in the context of branching random walks on integer lattices, both in discrete time (see, for example, a paper of Carmona and Hu \cite{CH14}) and continuous time (see, for example, a paper of Bulinskaya \cite{B18}).

Also, a closely related type of processes is branching Brownian motions with the branching rate given by either a compactly-supported function or a function decaying sufficiently fast at infinity (see, for example, papers of Koralov and Molchanov \cite{KM13}, Erickson \cite{E84} and Lalley and Sellke \cite{LS88}). 
\subsection{Notation and some earlier results}
Following a common practice we label the initial particle in the branching process by $\varnothing$ and all its ancestors according to the Ullam-Harris convention. In this way, for example, particle ``$\varnothing 1 2$" corresponds to child $2$ of child $1$ of the initial particle $\varnothing$.

We denote the set of all particles alive at time $t$ by $N_t$ and for every particle $u \in N_t$ we let $X^u_t$ be its spatial position at this time $t$. Furthermore, for any Borel set $A \subseteq \mathbb{R}$ we define
\[
N_t^A := \big\{ u \in N_t \ : \ X^u_t \in A \big\},
\]
the set of all particles located in the set $A$ at time $t$. 

We may, for example, take $A = [\lambda t, \infty)$ for some $\lambda > 0$, so that $N_t^{[\lambda t, \infty)}$ is the set of particles at time $t$ in the upper-half plane which are of distance at least $\lambda t$ from the origin, which we may also interpret as particles travelling at average speeds $\geq \lambda$.
It was shown in \cite{BH14} that if we define
\begin{equation}
\label{delta}
\Delta_\lambda := \left\{
\begin{array}{rl}
\frac{1}{2} \beta^2 - \beta \lambda& \text{if } \lambda \leq \beta, \\
- \frac{1}{2} \lambda^2& \text{if } \lambda \geq \beta, 
\end{array} \right.
\end{equation}
so that
\begin{equation*}
\Delta_\lambda \left\{
\begin{array}{rl}
> 0& \text{if } \lambda < \frac{\beta}{2}, \\
= 0& \text{if } \lambda = \frac{\beta}{2}, \\
< 0& \text{if } \lambda > \frac{\beta}{2}, 
\end{array} \right.
\end{equation*}
then the following results hold.

\noindent If $\lambda < \frac{\beta}{2}$ then
\begin{equation}
\label{old_subcrit}
\frac{1}{t} \log |N_t^{[\lambda t, \infty)}| \to \Delta_\lambda \ (>0) \qquad P^{x_0} \text{-a.s.}
\end{equation}
If $\lambda > \frac{\beta}{2}$ then
\begin{equation}
\label{old_supercrit}
|N_t^{[\lambda t, \infty)}| \to 0 \qquad P^{x_0} \text{-a.s.}
\end{equation}
and furthermore
\begin{equation}
\label{old_supercrit2}
\frac{1}{t} \log P^{x_0} \Big( |N_t^{[\lambda t, \infty)}| > 0 \Big) \to \Delta_\lambda \ (<0).
\end{equation}
In other words, the number of particles travelling at speeds $\lambda < \frac{\beta}{2}$ is growing exponentially while the number of particles travelling at speeds $\lambda > \frac{\beta}{2}$ is eventually $0$. It is then easily seen that if we define
\begin{equation}
\label{rightmost}
\mathcal{R}_t := \sup_{u \in N_t} X^u_t
\end{equation}
to be the position of the rightmost particle at time $t$ then 
\begin{equation}
\label{rightmost_limit}
\frac{\mathcal{R}_t}{t} \to \frac{\beta}{2} \qquad P^{x_0} \text{-a.s.}
\end{equation}
It was further shown in \cite{BH16} that for all $x \in \mathbb{R}$,
\begin{equation}
\label{rightmost_limit2}
P^{x_0} \Big(\mathcal{R}_t - \frac{\beta}{2}t \leq x\Big) \to E^{x_0} \exp \big\{ - \mathrm{e}^{- \beta x} M_\infty \big\},
\end{equation}
where $M_\infty$ is the strictly positive almost sure limit of the (square-integrable) martingale
\begin{equation}
\label{martingale}
M_t = \mathrm{e}^{- \frac{\beta^2}{2}t} \sum_{u \in N_t} \mathrm{e}^{- \beta |X^u_t|} \qquad \text{, } t \geq 0.
\end{equation}
Also, it was proved in a much more general setting in \cite{CS07} that for a suitable class of test functions $f(\cdot)$ it is true that
\begin{equation}
\label{slln}
\mathrm{e}^{- \frac{\beta^2}{2}t} \sum_{u \in N_t} f(X^u_t) \to M_\infty \int_{\mathbb{R}} f(x) \pi(\mathrm{d}x)
\qquad P \text{-a.s.},
\end{equation}
where
\begin{equation}
\label{invariant}
\pi(\mathrm{d}x) = \beta \mathrm{e}^{- \beta |x|} \mathrm{d}x
\end{equation}
(we don't normalise $\pi(\cdot)$ to be a probability measure). So, for example, taking $f(\cdot) = \mathbf{1}_A(\cdot)$ for a sufficiently nice set $A \subseteq \mathbb{R}$ one gets
\begin{equation}
\label{slln_sets}
\mathrm{e}^{- \frac{\beta^2}{2}t} |N^A_t| \to \pi(A) M_\infty \qquad P \text{-a.s.}
\end{equation}
Let us mention that versions of \eqref{old_subcrit} - \eqref{old_supercrit2} for a large class of branching Brownian motions were recently proved in \cite{S18} and \cite{S19}. Also, a while ago, versions of  \eqref{rightmost_limit} and \eqref{rightmost_limit2} for branching Brownian motions with branching rates given by continuous functions decaying sufficiently fast at infinity were proved in \cite{E84} and \cite{LS88} respectively.  Versions of \eqref{rightmost_limit} and \eqref{rightmost_limit2} for discrete -time  branching random walks on $\mathbb{Z}$ are available in \cite{CH14}.
\subsection{Main results}
Theorem \ref{main} below is the main result of this article. It essentially says that the distributions of particles around the critical lines $\frac{\beta}{2}t$ and $- \frac{\beta}{2}t$ converge to mixtures of Poisson point processes.
\begin{Theorem}
\label{main}
Take any $x_0 \in \mathbb{R}$, integers $n, m \geq 0$, integers $k_1, \cdots, k_n$, $l_1, \cdots, l_m \geq 0$ and Borel sets $A_1, \cdots, A_n$, $B_1, \cdots B_m$ such that $A_1, \cdots, A_n$ are mutually-disjoint, $B_1, \cdots, B_m$ are mutually-disjoint  and $\inf A_1, \cdots, \inf A_n$, $\inf B_1, \cdots, \inf B_m > - \infty$. 

For every Borel set $D \subseteq \mathbb{R}$ define
\[
\mu(D) := \int_D \beta \mathrm{e}^{- \beta y} \mathrm{d}y
\]
and, for convenience, let $k= k_1 + \cdots + k_n + l_1 + \cdots + l_m$, $A = \cup_{i = 1}^n A_i$ and 
$B = \cup_{j=1}^m B_j$. Then
\begin{align}
\label{eq_main}
\lim_{t \to \infty} &P^{x_0} \Big( 
\bigcap_{i=1}^n \big\{ \big\vert N_t^{A_i + \frac{\beta}{2} t} \big\vert = k_i \big\} \text{ , }
\bigcap_{j=1}^{m} \big\{ \big\vert N_t^{- B_j - \frac{\beta}{2} t} \big\vert = l_j \big\} \Big) \nonumber \\
= &E^{x_0} \Big[ \prod_{i = 1}^{n} \Big( \frac{\big( \mu(A_i) M_\infty \big)^{k_i}}{k_i !} 
\mathrm{e}^{- \mu(A_i) M_\infty} \Big) \prod_{j = 1}^{m} \Big( \frac{\big( \mu(B_j) M_\infty \big)^{l_j}}{l_j !} 
\mathrm{e}^{- \mu(B_j) M_\infty} \Big) \Big] \nonumber \\
= &\prod_{i = 1}^n \frac{\mu(A_i)^{k_i}}{k_i!} \prod_{j = 1}^m \frac{\mu(B_j)^{l_j}}{l_j!} E^{x_0} \Big[ M_\infty^k \mathrm{e}^{-(\mu(A) + \mu(B))M_\infty} \Big],
\end{align}
where in the above statement and everywhere else in this article for a Borel set $D \subseteq \mathbb{R}$ and a point $c \in \mathbb{R}$, $D + c = \big\{ x + c \ : \ x \in D \big\}$ and $ -D = \{ - x \ : \ x \in D \}$. We also adapt the conventions that $\inf \emptyset = \infty$, $\bigcap_{\emptyset}\{ \cdot \} = \Omega$ and $\prod_{\emptyset} (\cdot) = 1$.
\end{Theorem}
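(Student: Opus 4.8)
The plan is to prove \eqref{eq_main} by establishing convergence of the joint probability generating function and then reading off the individual probabilities. First I would reduce the statement to the claim that, for all $s_1,\dots,s_n,r_1,\dots,r_m\in[0,1)$,
\[
\Phi_t:=E^{x_0}\Big[\prod_{i=1}^n s_i^{|N_t^{A_i+\frac{\beta}{2}t}|}\prod_{j=1}^m r_j^{|N_t^{-B_j-\frac{\beta}{2}t}|}\Big]\longrightarrow E^{x_0}\Big[\exp\Big\{-M_\infty\Big(\sum_{i=1}^n(1-s_i)\mu(A_i)+\sum_{j=1}^m(1-r_j)\mu(B_j)\Big)\Big\}\Big].
\]
The right-hand side is exactly the joint generating function of a vector that, conditionally on $M_\infty$, consists of independent Poisson variables with means $\mu(A_i)M_\infty$ and $\mu(B_j)M_\infty$; letting all arguments tend to $1$ and using $E^{x_0}[M_\infty]=\mathrm{e}^{-\beta|x_0|}<\infty$ (valid since $M_t$ is a square-integrable, hence uniformly integrable, martingale) shows the limiting law is proper. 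As the process has only finitely many particles at each fixed $t$, every count is a.s.\ finite, and since the limiting generating function is that of a proper law, pointwise convergence on $[0,1)^{n+m}$ yields joint convergence in distribution. Extracting the coefficient of $\prod_i s_i^{k_i}\prod_j r_j^{l_j}$ then gives both the middle and the last line of \eqref{eq_main}, the two being related by the factorisation $\mathrm{e}^{-M_\infty(\mu(A)+\mu(B))}=\prod_i\mathrm{e}^{-\mu(A_i)M_\infty}\prod_j\mathrm{e}^{-\mu(B_j)M_\infty}$ together with $\sum_i\mu(A_i)=\mu(A)$, $\sum_j\mu(B_j)=\mu(B)$.

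To compute $\Phi_t$ I would use the branching (FKPP-type) equation for the catalytic model. Writing $g_t(y)=s_i$ on $A_i+\frac{\beta}{2}t$, $g_t(y)=r_j$ on $-B_j-\frac{\beta}{2}t$ and $g_t(y)=1$ elsewhere (these sets are eventually disjoint because the $A_i$ and the $B_j$ are bounded below), we have $\Phi_t=E^{x_0}[\prod_{u\in N_t}g_t(X^u_t)]$, and $v(x,s):=E^{x}[\prod_{u\in N_s}g_t(X^u_s)]$ solves $\partial_s v=\tfrac12\partial_{xx}v+\beta\delta_0(v^2-v)$ with $v(\cdot,0)=g_t$, evaluated at $(x_0,t)$. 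Setting $f=1-v$ and linearising about the stable state $v\equiv 1$ gives $\partial_s f\approx(\tfrac12\partial_{xx}+\beta\delta_0)f$, whose principal mode is governed by the eigen-relation $(\tfrac12\partial_{xx}+\beta\delta_0)\mathrm{e}^{-\beta|x|}=\tfrac{\beta^2}{2}\mathrm{e}^{-\beta|x|}$ --- the very relation that makes $M_t$ in \eqref{martingale} a martingale and that fixes the front speed $\beta/2$. Because the dips of $g_t$ near $+\frac{\beta}{2}t$ and near $-\frac{\beta}{2}t$ are far apart, in the linear regime their contributions superpose additively, and the front analysis converts this into the limit $E^{x_0}[\exp\{-cM_\infty\}]$ with $c=\sum_i(1-s_i)\mu(A_i)+\sum_j(1-r_j)\mu(B_j)$; the additivity of $c$ over the disjoint windows is precisely the asserted conditional independence, given $M_\infty$, of the right- and left-front counts.

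The normalisation $\mu(A_i)M_\infty$ can be pinned down at the level of the mean by the many-to-one lemma: $E^{x_0}|N_t^{A+\frac{\beta}{2}t}|=\widetilde E^{x_0}[\mathrm{e}^{\beta L_t}\mathbf{1}_{\{X_t\in A+\frac{\beta}{2}t\}}]$, where $\widetilde E^{x_0}$ denotes expectation for a single Brownian motion with local time $L_t$ at $0$. The large-time asymptotics of the associated kernel --- the fundamental solution of $\partial_t=\tfrac12\partial_{xx}+\beta\delta_0$, whose leading behaviour near the front is carried by $\mathrm{e}^{\frac{\beta^2}{2}t}\mathrm{e}^{-\beta|x_0|}\mathrm{e}^{-\beta y}$ --- show that this converges to $\mu(A)\mathrm{e}^{-\beta|x_0|}=\mu(A)E^{x_0}[M_\infty]$, matching the mean of the mixed-Poisson limit. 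This computation also confirms that the rescaled point process $\sum_{u}\delta_{X^u_t-\frac{\beta}{2}t}$ has intensity converging to $\mu(\mathrm{d}y)E^{x_0}[M_\infty]$, as required.

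The hard part will be the nonlinear front analysis turning the above heuristics into the full generating-function convergence. The linearisation is valid only where $f=1-v$ is small, whereas near the fronts $f$ is of order one, and the branching nonlinearity $\beta\delta_0(v^2-v)$ at the origin is not a small correction but the very mechanism that generates the particle population; consequently a genuine travelling-wave/Bramson-type argument is needed to show that $v(x_0,t)$ converges to $E^{x_0}[\exp\{-cM_\infty\}]$ --- that is, that the front profile encodes the Laplace transform of $M_\infty$ with the correct random prefactor, rather than merely a quantity proportional to it. In the special case of a single half-line $A_1=(x,\infty)$ with $k_1=0$ this reduces exactly to the rightmost-particle statement \eqref{rightmost_limit2} proved in \cite{BH16}; the new work here is to run the same analysis for arbitrary Borel windows bounded below and, crucially, for several disjoint windows on both sides simultaneously, so as to obtain the full joint mixed-Poisson law and not only the law of the maximum. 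I expect this to follow by combining the martingale convergence $M_t\to M_\infty$ with a uniform control, in the moving frame, of the difference between the nonlinear solution and its linear principal part.
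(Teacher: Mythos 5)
Your reduction to joint probability generating functions is sound, and your first-moment calculation via many-to-one correctly pins down the intensity $\mu(\cdot)\,\mathrm{e}^{-\beta|x_0|}$; the factorisation relating the middle and last lines of \eqref{eq_main} is also fine. But the proposal has a genuine gap at its centre, and you have in fact flagged it yourself: the claim that $v(x_0,t)=E^{x_0}\big[\prod_{u\in N_t}g_t(X^u_t)\big]$ converges to $E^{x_0}\big[\exp\{-cM_\infty\}\big]$ is never proved --- it \emph{is} the theorem, restated in Laplace-transform form. The linearisation heuristic cannot be upgraded as stated: the nonlinearity $\beta\delta_0(v^2-v)$ sits exactly at the catalyst, where $1-v$ is not small, and the origin is also the only place through which the two fronts communicate, so the claimed additive superposition of the two dips is precisely what needs proof, not an input to it. More fundamentally, a travelling-wave/Bramson analysis of the deterministic quantity $v(x_0,t)$ can at best show convergence to some limiting profile; identifying that limit as an expectation over the random variable $M_\infty$, with the exact constants $\mu(A_i)$ and not merely quantities proportional to them, requires a probabilistic decomposition linking the solution of the equation to the martingale \eqref{martingale} --- there is no purely analytic way for $M_\infty$ to appear inside the expectation.

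That missing decomposition is exactly what the paper supplies, and it bypasses the PDE entirely: condition on $\mathcal{F}_{s(t)}$ with $s(t)\to\infty$, $s(t)=o(t)$; use many-to-one and many-to-two (spine) estimates, together with the explicit transition density under the Girsanov change of measure, to show that each particle $u\in N_{s(t)}$ contributes a point near either frontier with conditional probability approximately $\mu(\cdot)\,\mathrm{e}^{-\beta|X^u_{s(t)}|-\frac{\beta^2}{2}s(t)}$ and contributes more than one point with negligible probability; then a Poisson approximation to the resulting sum of conditionally independent Bernoulli variables yields the almost sure conditional limit \eqref{eq_main_cond} with random intensity $\mu(\cdot)M_\infty$, from which \eqref{eq_main} follows by bounded convergence. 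If you wish to keep the generating-function framing, this same conditioning argument proves the convergence of $\Phi_t$; but some such argument must actually be carried out --- ``I expect this to follow'' at the very step where the mixed-Poisson law and the exact random prefactor emerge leaves the proof incomplete. Note also that even in the single half-line case you cite, \eqref{rightmost_limit2} from \cite{BH16} is proved by a probabilistic argument of this kind, not by front analysis, so there is no existing analytic result one could quote to fill the hole.
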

\begin{Remark}
\label{rem_main}
Let us note that we shall actually prove something slightly stronger than \eqref{eq_main}. Namely, that for $s(t)$ such that $s(t) \to \infty$ but $s(t) = o(t)$ as $t \to \infty$ it is true that
\begin{align}
\label{eq_main_cond}
&P^{x_0} \Big( \bigcap_{i=1}^n \big\{ \big\vert N_t^{A_i + \frac{\beta}{2} t} \big\vert = k_i \big\} \text{ , } \bigcap_{j=1}^{m} \big\{ \big\vert N_t^{- B_j - \frac{\beta}{2} t} \big\vert = l_j \big\} \ \Big\vert \ \mathcal{F}_{s(t)}\Big) \nonumber \\
\to &\ \prod_{i = 1}^{n} \Big( \frac{\big( \mu(A_i) M_\infty \big)^{k_i}}{k_i !} 
\mathrm{e}^{- \mu(A_i) M_\infty} \Big) \prod_{j = 1}^{m} \Big( \frac{\big( \mu(B_j) M_\infty \big)^{l_j}}{l_j !} 
\mathrm{e}^{- \mu(B_j) M_\infty} \Big) \qquad P^{x_0} \text{-a.s.} \nonumber\\
= &\prod_{i = 1}^n \frac{\mu(A_i)^{k_i}}{k_i!} \prod_{j = 1}^m \frac{\mu(B_j)^{l_j}}{l_j!} M_\infty^k \mathrm{e}^{-(\mu(A) + \mu(B))M_\infty},
\end{align}
where $(\mathcal{F}_t)_{t \geq 0}$ is the natural filtration of the branching process. Equation \eqref{eq_main} 
will then follow by bounded convergence.
\end{Remark}
Results of the type of Theorem \ref{main} are quite natural and have appeared in literature before. For example, 
the distribution of particles near the frontier in a branching Brownian motion with a spatially-homogeneous branching rate has been discussed a lot in recent years. See for example papers of A\"id\'ekon , Berestycki, Brunet and Shi \cite{ABBS13}, Arguin and Bovier \cite{AB15} and Brunet and Derrida \cite{BD11} to mention just a few (but note that the limiting distribution in such a model is a mixed decorated Poisson point process). The convergence of the distribution of particles near the frontier to a mixed Poisson point process in a  branching Brownian motion with a continuous branching rate decaying sufficiently fast at $\infty$ was also mentioned by Lalley and Sellke in \cite{LS88} although the argument they presented is quite different from ours.

Below we illustrate how Theorem \ref{main} can be applied.
\begin{Example}
\label{ex_1}
By analogy with the rightmost particle, for every $t \geq 0$, let us define
\[
\mathcal{L}_t := \inf_{u \in N_t} X^u_t,
\]
the position of the leftmost particle at time $t$. Then from \eqref{eq_main} we may recover the limiting joint distribution of $\mathcal{R}_t  - \frac{\beta}{2}t$ and $\mathcal{L}_t + \frac{\beta}{2}t$. Namely, for any $x^-$, $x^+ \in \mathbb{R}$ we have
\begin{align*}
&P^{x_0} \Big( \mathcal{L}_t + \frac{\beta}{2}t > x^- , \ \mathcal{R}_t - \frac{\beta}{2}t \leq x^+ \Big)\\
= &P^{x_0} \Big( \big\vert N_t^{(- \infty , - \frac{\beta}{2} t + x^-]} \big\vert = 0 , \
\big\vert N_t^{(\frac{\beta}{2}t + x^+, \infty)} \big\vert = 0 \Big)\\
\to &E^{x_0} \Big[ \exp \big\{ - \big(\mathrm{e}^{\beta x^-} + \mathrm{e}^{- \beta x^+} \big)M_\infty \big\} \Big]
\quad \text{ as } t \to \infty
\end{align*}
and hence
\begin{align*}
&P^{x_0} \Big( \mathcal{L}_t + \frac{\beta}{2}t \leq x^- , \ \mathcal{R}_t - \frac{\beta}{2}t \leq x^+ \Big)\\
= &P^{x_0} \Big(\mathcal{R}_t - \frac{\beta}{2}t \leq x^+ \Big) - P^{x_0} \Big( \mathcal{L}_t + \frac{\beta}{2}t > x^- , \ \mathcal{R}_t - \frac{\beta}{2}t \leq x^+ \Big)\\
\to &E^{x_0} \Big[ \exp \big\{ - \mathrm{e}^{- \beta x^+} M_\infty \big\} 
\Big( 1 - \exp \big\{ - \mathrm{e}^{ \beta x^-} M_\infty \big\} \Big) \Big] \quad \text{ as } t \to \infty.
\end{align*}
\end{Example}

\begin{Example}
\label{ex_2}
For every $t \geq 0$ and $n \in \mathbb{N}$ let $\mathcal{R}^{(n)}_t$ be the value of the $n$th largest spatial position of all the particles in the system at time $t$ so that $\mathcal{R}^{(1)} \equiv \mathcal{R}$. Then from  \eqref{eq_main} we derive the limiting distribution of $\mathcal{R}^{(n)}_t - \frac{\beta}{2} t$ generalising the earlier result \eqref{rightmost_limit2}. Namely, for any $x \in \mathbb{R}$ we have
\begin{align*}
&P^{x_0} \Big( \mathcal{R}^{(n)}_t - \frac{\beta}{2} t \leq x \Big)\\ 
= &P^{x_0} \Big( \big\vert N^{(\frac{\beta}{2}t + x, \infty)}\big\vert \leq n - 1 \Big)\\
\to &E^{x_0} \Big[ \Big( \sum_{k=0}^{n-1} \frac{(\mathrm{e}^{- \beta x} M_\infty)^k}{k!}\Big)
\exp \big\{ - \mathrm{e}^{- \beta x} M_\infty \big\} \Big]
\end{align*}
as $t \to \infty$.
\end{Example}
While proving our main result we shall also establish the following results regarding the asymptotic behaviour of the number of particles travelling at super- and subcritical speeds giving some finer versions of \eqref{old_subcrit} and \eqref{old_supercrit2}.
\begin{Proposition}[Subcritical speeds, $\lambda \in (0, \frac{\beta}{2})$]
\label{main3}
Take any $x_0 \in \mathbb{R}$, $\lambda \in (0, \frac{\beta}{2})$ and a Borel set $A \subset \mathbb{R}$ such that $\inf A > - \infty$. Then
\begin{equation}
\label{eq_main3}
\mathrm{e}^{- \Delta_\lambda t} |N_t^{A+\lambda t}| \to \mu(A) M_\infty \qquad \text{ in } P^{x_0}\text{-probability}.
\end{equation}
\end{Proposition}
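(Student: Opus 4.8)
The plan is to prove the stronger conditional statement suggested by Remark \ref{rem_main}: fix $s = s(t)$ with $s(t) \to \infty$ and $s(t) = o(t)$, condition on $\mathcal{F}_s$, and run a first-and-second-moment argument for $Z_t := \mathrm{e}^{- \Delta_\lambda t}|N_t^{A+\lambda t}|$. By the branching property, given $\mathcal{F}_s$ the particles $u \in N_s$ launch independent copies of the process from their positions $X^u_s$, so $|N_t^{A+\lambda t}|$ is a conditionally independent sum over $u \in N_s$ of subtree contributions. I would establish (i) $E^{x_0}[Z_t \mid \mathcal{F}_s] \to \mu(A) M_\infty$ a.s. and (ii) $\mathrm{Var}(Z_t \mid \mathcal{F}_s) \to 0$, then conclude by conditional Chebyshev. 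The computational engine throughout is the many-to-one and many-to-two formulas for the catalytic process, fed by the explicit joint law of Brownian position and local time at the origin, $P^x(X_\sigma \in \mathrm{d}z, L_\sigma \in \mathrm{d}\ell) = \frac{|x|+|z|+\ell}{\sqrt{2\pi\sigma^3}} \exp\big(-(|x|+|z|+\ell)^2/(2\sigma)\big)\,\mathrm{d}\ell\,\mathrm{d}z$ for $\ell > 0$.

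\emph{First moment.} By many-to-one, for a single particle started at $x$, $E^x[|N_\sigma^{A+\lambda t}|] = E^x[\mathbf{1}_{A+\lambda t}(X_\sigma)\mathrm{e}^{\beta L_\sigma}]$. Inserting the joint density and integrating out $\ell$ through the substitution $u = |x|+z+\ell$, the inner integral is $\mathrm{e}^{-\beta(|x|+z)}\int u\,\mathrm{e}^{\beta u - u^2/(2\sigma)}\,\mathrm{d}u$, whose Gaussian factor is centred at the saddle $u = \beta\sigma$. The decisive point is that since $\lambda < \frac{\beta}{2} < \beta$ the target $z \approx \lambda t$ sits below $\beta\sigma$, so the saddle is interior and Laplace's method gives $E^x[|N_\sigma^{A+\lambda t}|] \approx \mathrm{e}^{\beta^2\sigma/2}\mathrm{e}^{-\beta|x|}\mathrm{e}^{-\beta\lambda t}\mu(A)$. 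Taking $\sigma = t-s$ and summing over $u \in N_s$, the exponents combine as $\frac{\beta^2}{2}(t-s) - \beta\lambda t = \Delta_\lambda t - \frac{\beta^2}{2}s$, whence $E^{x_0}[Z_t \mid \mathcal{F}_s] \approx \mu(A)\,\mathrm{e}^{-\beta^2 s/2}\sum_{u \in N_s}\mathrm{e}^{-\beta|X^u_s|} = \mu(A) M_s \to \mu(A) M_\infty$ as $s(t) \to \infty$. It is precisely the $\mathrm{e}^{-\beta|x|}$ tail generated by the local time weight that reconstitutes the martingale $M_s$.

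\emph{Second moment.} For the conditional variance I bound $\mathrm{Var}(Z_t \mid \mathcal{F}_s) = \mathrm{e}^{-2\Delta_\lambda t}\sum_{u \in N_s}\mathrm{Var}^{X^u_s}(|N_{t-s}^{A+\lambda t}|) \le \mathrm{e}^{-2\Delta_\lambda t}\sum_{u \in N_s}E^{X^u_s}[(|N_{t-s}^{A+\lambda t}|)^2]$ and invoke the many-to-two formula. Since every branching occurs at the origin, the dominant off-diagonal term for a subtree started at $x$ is $2\beta\int_0^{t-s}\psi_r(x)\,\big(E^0[|N_{t-s-r}^{A+\lambda t}|]\big)^2\,\mathrm{d}r$, where $\psi_r(x) = E^x[\mathrm{e}^{\beta L_r}\delta_0(X_r)] \approx \beta \mathrm{e}^{\beta^2 r/2}\mathrm{e}^{-\beta|x|}$ is the same saddle computation at $z=0$. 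The two sub-trees both restart from the origin, each contributing $E^0[\cdots] \approx \mathrm{e}^{\beta^2(t-s-r)/2}\mathrm{e}^{-\beta\lambda t}\mu(A)$; the $r$-integral is then governed by $\mathrm{e}^{-\beta^2 r/2}$ and concentrates near $r=0$, producing $\approx 4\,\mathrm{e}^{-\beta|x|}\mu(A)^2\,\mathrm{e}^{2\Delta_\lambda t}\mathrm{e}^{-\beta^2 s}$. Crucially this carries a single factor $\mathrm{e}^{-\beta|x|}$ (the lone recent common ancestor), not $\mathrm{e}^{-2\beta|x|}$, so summing over $N_s$ returns $M_s$ rather than $M_s^2$: $\mathrm{Var}(Z_t \mid \mathcal{F}_s) \lesssim \mu(A)^2\,\mathrm{e}^{-\beta^2 s}\sum_{u \in N_s}\mathrm{e}^{-\beta|X^u_s|} = \mu(A)^2\,\mathrm{e}^{-\beta^2 s/2}M_s \to 0$. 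The extra decay $\mathrm{e}^{-\beta^2 s/2}$ won from $s(t)\to\infty$ is exactly what annihilates the conditional variance, even though for a single ancestral particle the normalised count does \emph{not} concentrate.

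\emph{Assembling.} From (i) and (ii), conditional Chebyshev yields $P^{x_0}(|Z_t - E^{x_0}[Z_t \mid \mathcal{F}_s]| > \eps) \le \eps^{-2}E^{x_0}[\mathrm{Var}(Z_t \mid \mathcal{F}_s)] \to 0$ (using $E^{x_0}[M_s] = \mathrm{e}^{-\beta|x_0|}$), while $E^{x_0}[Z_t \mid \mathcal{F}_s] \to \mu(A) M_\infty$ a.s.; together these give $Z_t \to \mu(A) M_\infty$ in $P^{x_0}$-probability. The real work — and the principal obstacle — is not the heuristic scaling above but upgrading the Laplace approximations for both moments to two-sided bounds whose errors are controlled uniformly in the starting point $x$ (so the sums over $N_s$ converge and remain dominated), and verifying that the discarded pieces are genuinely negligible: the ``no local time'' paths reaching $\lambda t$ without visiting the origin (exponential rate $-\lambda^2/2 < \Delta_\lambda$, since $\Delta_\lambda + \lambda^2/2 = \tfrac12(\beta-\lambda)^2 > 0$), and the range of large split times in the many-to-two integral where the saddle leaves the domain of integration.
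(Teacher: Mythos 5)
Your proposal is correct and is essentially the paper's own argument: condition on $\mathcal{F}_{s(t)}$ with $s(t)\to\infty$, $s(t)=o(t)$; show $\mathrm{e}^{-\Delta_\lambda t}E^{x_0}\big(|N_t^{A+\lambda t}|\,\big\vert\,\mathcal{F}_s\big)\to\mu(A)M_\infty$ a.s.\ via the two-sided first-moment estimate (Corollary \ref{estimate_exp}); bound the conditional variance by the many-to-two second moment, whose off-diagonal part carries a single factor $\mathrm{e}^{-\beta|x|}$ (one recent common ancestor) and hence sums over $N_s$ to $O\big(\mathrm{e}^{-\beta^2 s/2}M_s\big)$ (Proposition \ref{square_estimate}); and conclude by conditional Chebyshev plus $E^{x_0}M_s=\mathrm{e}^{-\beta|x_0|}$. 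The only divergence is computational rather than structural — you evaluate the moments from the joint position/local-time density by Laplace's method, where the paper uses the change of measure of Proposition \ref{transition_density} and an integration by parts against $\mathrm{d}\big(\mathrm{e}^{\beta\tilde L^{(1)}_\tau}\big)$ — and these produce the same estimates, including your observation that paths avoiding the origin contribute at rate $-\lambda^2/2<\Delta_\lambda$, which is precisely the paper's term (I).
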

\begin{Remark}
\label{rem_main3}
From the proof of Proposition \ref{main3} it will be apparent that convergence in \eqref{eq_main3} also holds almost surely along any sequence $(t_n)_{n \geq 1}$ such that $\frac{t_n}{(\log t_n)^\alpha} \to \infty$ for some appropriate choice of $\alpha > 0$.
\end{Remark}
\begin{Proposition}[Supercritical speeds, $\lambda \in (\frac{\beta}{2}, \beta)$]
\label{main2}
Take any $x_0 \in \mathbb{R}$, $\lambda \in (\frac{\beta}{2}, \beta)$ and Borel sets $A$, $B \subset \mathbb{R}$ such that $\inf A$, $\inf B > - \infty$. Then
\begin{equation}
\label{eq_main2}
\mathrm{e}^{-\Delta_\lambda t} P^{x_0} \Big( \big\vert N_t^{(A + \lambda t) \cup (-B - \lambda t)} \big\vert > 0 \Big) 
\to \big( \mu(A) + \mu(B)\big) \mathrm{e}^{- \beta |x_0|}.
\end{equation}
as $t \to \infty$.
\end{Proposition}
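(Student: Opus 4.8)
The plan is to prove the statement by the first‑ and second‑moment method, exploiting the fact that a particle in the far region $C_t := (A + \lambda t) \cup (-B - \lambda t)$ is so rare that the event $\{|N_t^{C_t}| > 0\}$ agrees, to leading order, with the presence of a \emph{single} such particle. Writing $N := |N_t^{C_t}|$, I would use the two elementary bounds
\[
P^{x_0}(N > 0) \le E^{x_0}[N], \qquad E^{x_0}[N] - P^{x_0}(N > 0) \le \tfrac12 E^{x_0}\big[N(N-1)\big],
\]
the second of which follows from $E[N(N-1)] = \sum_{j\ge2} j(j-1)P(N=j) \ge 2\sum_{j\ge 2}(j-1)P(N=j) = 2\big(E[N] - P(N>0)\big)$. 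Hence it suffices to show that $\mathrm{e}^{-\Delta_\lambda t} E^{x_0}[N] \to (\mu(A)+\mu(B))\mathrm{e}^{-\beta|x_0|}$ and that $E^{x_0}[N(N-1)] = o(E^{x_0}[N])$, and the Proposition follows.

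For the first moment I would apply the many‑to‑one identity, which for this binary catalytic model reads $E^{x_0}\big[\sum_{u\in N_t} f(X^u_t)\big] = E^{x_0}_{\mathrm{BM}}\big[\mathrm{e}^{\beta L_t} f(X_t)\big]$, where $(X_t)$ is a single Brownian motion and $L_t$ its local time at $0$. Taking $f = \mathbf{1}_{A+\lambda t}$ (and symmetrically $\mathbf{1}_{-B-\lambda t}$) and conditioning on the first hitting time $\tau_0$ of the origin, one reduces to evaluating $E^0_{\mathrm{BM}}[\mathrm{e}^{\beta L_r}; X_r \in A + \lambda t]$ through the joint density of $(X_r, L_r)$, namely $\frac{\ell+y}{\sqrt{2\pi r^3}}\mathrm{e}^{-(\ell+y)^2/2r}$ for $y>0$. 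Completing the square in the resulting integral produces the factor $\mathrm{e}^{\beta^2 r/2}$, while the constraint $y \approx \lambda t$ with $\lambda < \beta$ places the target to the left of the Gaussian peak $\beta r$, so the $\ell$‑integral converges to $\beta$ and contributes $\int_A \beta \mathrm{e}^{-\beta z}\,\diffd z = \mu(A)$. Integrating over $\tau_0$ against its density $\frac{|x_0|}{\sqrt{2\pi s^3}}\mathrm{e}^{-x_0^2/2s}$, weighted by $\mathrm{e}^{-\beta^2 s/2}$, yields the Laplace transform $E^{x_0}[\mathrm{e}^{-\frac{\beta^2}{2}\tau_0}] = \mathrm{e}^{-\beta|x_0|}$. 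Collecting the exponential factors gives $\mathrm{e}^{-\Delta_\lambda t}E^{x_0}[|N_t^{A+\lambda t}|] \to \mu(A)\mathrm{e}^{-\beta|x_0|}$, and adding the left contribution gives the claimed constant. The hypothesis $\inf A > -\infty$ is used both to keep $A + \lambda t$ in the far‑right regime and to ensure $\mu(A) < \infty$, and is what permits the dominated‑convergence justification of the limit.

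For the second factorial moment I would use the corresponding many‑to‑two (spine) decomposition. Since branching occurs only at the origin, the most recent common ancestor of two distinct particles must split there, at some time $\sigma \in [0,t]$; thereafter the two subtrees evolve independently and each must send a particle out to $\approx \lambda t$ in the remaining time $t - \sigma$. Estimating the single ancestral segment up to the split by $\mathrm{e}^{\beta^2\sigma/2}$ and each independent subtree by $E^0_{\mathrm{BM}}[\mathrm{e}^{\beta L_{t-\sigma}}; X_{t-\sigma}\approx\lambda t] \asymp \mathrm{e}^{-\beta\lambda t + \beta^2(t-\sigma)/2}$, the total exponent is $\frac{\beta^2}{2}\sigma + 2\big(-\beta\lambda t + \frac{\beta^2}{2}(t-\sigma)\big) = 2\Delta_\lambda t - \frac{\beta^2}{2}\sigma$, which is maximised at $\sigma = O(1)$ and gives $E^{x_0}[N(N-1)] = O(\mathrm{e}^{2\Delta_\lambda t})$. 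As $\Delta_\lambda < 0$ for $\lambda \in (\tfrac{\beta}{2},\beta)$ we have $2\Delta_\lambda t \ll \Delta_\lambda t$, so $E^{x_0}[N(N-1)] = o(\mathrm{e}^{\Delta_\lambda t}) = o(E^{x_0}[N])$, as required.

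I expect the main obstacle to be the second‑moment step: setting up a clean many‑to‑two formula adapted to this local‑time‑driven (catalytic) branching and rigorously bounding the integral over the split time $\sigma$ together with the two far‑travel subtree factors uniformly. The first‑moment asymptotic, though conceptually the heart of the matter (it is where the constant $(\mu(A)+\mu(B))\mathrm{e}^{-\beta|x_0|}$ emerges, via the hitting‑time Laplace transform $\mathrm{e}^{-\beta|x_0|}$), is a more routine Laplace‑type computation once the joint density of $(X_r,L_r)$ and the decomposition at $\tau_0$ are in place; the only genuine care there is the dominated‑convergence justification that exploits $\inf A, \inf B > -\infty$.
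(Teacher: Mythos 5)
Your proposal is correct and takes essentially the same approach as the paper: a second-moment method in which the many-to-one lemma yields $\mathrm{e}^{-\Delta_\lambda t}E^{x_0}\big\vert N_t^{(A+\lambda t)\cup(-B-\lambda t)}\big\vert \to (\mu(A)+\mu(B))\mathrm{e}^{-\beta|x_0|}$, while a many-to-two computation (the common ancestor of two far particles must split at the origin at some time $\sigma$, with the integral over $\sigma$ converging) gives $E^{x_0}[N(N-1)]=O(\mathrm{e}^{2\Delta_\lambda t})=o(\mathrm{e}^{\Delta_\lambda t})$. The differences are cosmetic: the paper evaluates the first moment via the Girsanov change of measure \eqref{girsanov} and the quoted Borodin--Salminen density \eqref{density1} rather than your direct $(X_r,L_r)$/hitting-time decomposition, it converts moments into probabilities with Paley--Zygmund (Proposition \ref{prob_estimates} with $s\equiv 0$) instead of your inequality $E[N]-P(N>0)\le\tfrac12 E[N(N-1)]$, and the clean many-to-two formula you flag as the main obstacle is exactly Lemma \ref{many_to_two}, already available from \cite{BH16} and \cite{S08}.
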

\begin{Remark}
\label{rem_main2}
The cases $\lambda = \beta$ and $\lambda > \beta$ will require separate analysis. Partial results are available in \cite{S19} (Theorem 3.7).
\end{Remark}
\subsection{Outline of the paper}
The article is organised as follows.

Subsection 2.1 is devoted to various first-moment calculations. In particular, we show there that given a Borel set $A$ such that $\inf A > - \infty$, it is is true that for large $t$, $s = o(t)$ and $x_0$, which is allowed to depend on $t$ to some extent,
\begin{equation} 
\label{approx1}
E^{x_0} \big\vert N_{t-s}^{A + \frac{\beta}{2}t} \big\vert \approx \mu(A) \mathrm{e}^{- \beta |x_0| - \frac{\beta^2}{2}s}.
\end{equation}
This is made precise in Corollary \ref{estimate1}.

In Subsection 2.2 we discuss second momemt calculations and in particular we show that
\begin{equation} 
\label{approx2}
E^{x_0} \big\vert N_{t-s}^{A + \frac{\beta}{2}t} \big\vert^2 = E^{x_0} \big\vert N_{t-s}^{A + \frac{\beta}{2}t} \big\vert + correction \ term,
\end{equation}
where we have a good control of the correction term.

In Subsection 2.3 we deduce from \eqref{approx1} and \eqref{approx2} that if $s \to \infty$ then
\begin{align}
\label{approx3}
&P^{x_0} \Big( \big\vert N_{t-s}^{A + \frac{\beta}{2}t} \big\vert = 0 \Big) \approx 1 - \mu(A) \mathrm{e}^{- \beta |x_0| - \frac{\beta^2}{2}s}, \nonumber\\
&P^{x_0} \Big( \big\vert N_{t-s}^{A + \frac{\beta}{2}t} \big\vert = 1 \Big) \approx \mu(A) \mathrm{e}^{- \beta |x_0| - \frac{\beta^2}{2}s}, \nonumber\\
&P^{x_0} \Big( \big\vert N_{t-s}^{A + \frac{\beta}{2}t} \big\vert > 1 \Big) \text{ becomes negligibly small.}
\end{align}
We also prove Proposition \ref{main2} there.

In Subsection 3.1 we prove \eqref{eq_main_cond} and consequently Theorem \ref{main} via the following argument. Take for simplicity a single set $A \subseteq \mathbb{R}$ and a non-negative integer $k$. Then note that from the Markov property
\[
\big\vert N_t^{A + \frac{\beta}{2}t} \big\vert = \sum_{u \in N_s} \big\vert N_{t-s}^{A + \frac{\beta}{2}t}(u) \big\vert,
\]
where, conditional on $\mathcal{F}_s$, $\big\vert N_{t-s}^{A + \frac{\beta}{2}t}(u) \big\vert$, $u \in N_s$ are independent copies of $\big\vert N_{t-s}^{A + \frac{\beta}{2}t} \big\vert$ in branching processes initiated from $X^u_s$, $u \in N_s$. Then from \eqref{approx3} we know that these are essentially Bernoulli random variables with conditional probabilities of success $\approx \mu(A) \mathrm{e}^{- \beta |X^u_s| - \frac{\beta^2}{2}s}$.

Then, making use of this observation and some other approximations, we get that 
\begin{align*}
&P^{x_0} \Big( \big\vert N_t^{A + \frac{\beta}{2}t} \big\vert = k \ \Big\vert \ \mathcal{F}_s \Big)\\
\approx &\frac{1}{k!} \sum_{(u_1, \cdots, u_k) \subseteq N_s} P^{X^{u_1}_s} \Big( \big\vert N_{t-s}^{A + \frac{\beta}{2}t} \big\vert = 1 \Big) \cdots P^{X^{u_k}_s} \Big( \big\vert N_{t-s}^{A + \frac{\beta}{2}t} \big\vert = 1 \Big)\\ 
&\qquad\qquad\qquad \times \prod_{u \neq u_1 , \cdots , u_k} P^{X^u_s} \Big( \big\vert N_{t-s}^{A + \frac{\beta}{2}t} \big\vert = 0 \Big)\\
\approx &\frac{1}{k!} \Big[\mu(A) \sum_{u_1 \in N_s} \mathrm{e}^{- \beta |X^{u_1}_s| - \frac{\beta^2}{2}s}\Big] \cdots \Big[\mu(A) \sum_{u_k \in N_s} \mathrm{e}^{- \beta |X^{u_k}_s| - \frac{\beta^2}{2}s}\Big]\\
&\qquad\qquad\qquad \times \prod_{u \in N_s} \Big( 1 - \mu(A) \mathrm{e}^{- \beta |X^u_s| - \frac{\beta^2}{2}s} \Big)\\
\approx &\frac{1}{k!} \mu(A)^k M_s^k \mathrm{e}^{- \mu(A) M_s}
\approx \frac{1}{k!} \mu(A)^k M_\infty^k \mathrm{e}^{- \mu(A) M_\infty},
\end{align*}
where the summation $\sum_{(u_1, \cdots, u_k) \subseteq N_s}$ is taken over all $k$-permutations of the set $N_s$.

The above argument makes it particularly clear that the Poisson distribution of particles near the frontier emerges from the generalised Poisson approximation to the Binomial.

We finish the paper with the proof of Theorem \ref{main3}, which we give in Subsection 3.2.
\section{Preliminary calculations}
In this section we derive various estimates for $\big\vert N_{t-s}^{A \pm \lambda t} \big\vert$ necessary for proofs of the main results.
\subsection{First moment calculations}

It is a common practice to extend the original probability space of the branching system by adding the spine process to it. The spine is an infinite line of descent which begins with the initial particle and whenever the particle presently in the spine dies one of its two children is chosen with probability $\frac{1}{2}$ to continue the spine independently of all the previous history. 

If we then let $\tilde{P}$ denote the extension of the original probability measure $P$ to this bigger probability space and if at every $t \geq 0$ we let $\xi_t$ denote the spatial position of the spine particle at time $t$ then one can see that the process $(\xi_t)_{t \geq 0}$ is a Brownian motion under $\tilde{P}$. Furthermore the following result is known to hold.
\begin{Lemma}[Many-to-One Lemma]
\label{many_to_one}
Let $f : \mathbb{R} \to \mathbb{R}$ be a sufficiently nice function (non-negative Borel measurable will be enough for us). Then
\begin{equation}
\label{eq_many_to_one}
E^{x_0} \Big[ \sum_{u \in N_t} f(X^u_t) \Big] = \tilde{E}^{x_0} \Big[ f(\xi_t) \mathrm{e}^{\beta \tilde{L}_t} \Big] \text{,}
\end{equation}
where $\tilde{E}$ is the expectation function corresponding to the probability measure $\tilde{P}$ and $(\tilde{L}_t)_{t \geq 0}$ is the local time at the origin of $(\xi_t)_{t \geq 0}$.
\end{Lemma}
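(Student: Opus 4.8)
The plan is to prove the identity by showing that both sides, viewed as functions of the starting point $x$ and time $t$, solve the same first-moment (renewal) equation, and then invoking uniqueness. Write $u_f(x,t) := E^x\big[\sum_{u \in N_t} f(X^u_t)\big]$ for the left-hand side and $v_f(x,t) := \tilde{E}^x\big[f(\xi_t)\mathrm{e}^{\beta \tilde{L}_t}\big]$ for the right-hand side; since $(\xi_t)_{t\ge0}$ is a standard Brownian motion under $\tilde{P}$, the latter is simply a Brownian expectation weighted by the exponential of the local time at the origin. First I would reduce to bounded $f$, the general non-negative case following by monotone convergence, and record that both quantities are finite: the Brownian local time $\tilde{L}_t$ at a fixed time has a half-normal (hence sub-Gaussian) law, so $\tilde{E}^x[\mathrm{e}^{\beta \tilde{L}_t}] < \infty$ for every $\beta$ and $t$.

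Next I would derive a renewal equation for $u_f$ by conditioning on the first fission time $T$. Using $P^x\big(T > t \mid (X_s)\big) = \mathrm{e}^{-\beta L_t}$ and the resulting $P^x\big(T \in \mathrm{d}r \mid (X_s)\big) = \beta\, \mathrm{e}^{-\beta L_r}\,\mathrm{d}L_r$ (so that a fission necessarily occurs at the origin, $X_T = 0$), together with the branching property — after the first fission two independent copies of the process are launched from $0$ — one obtains
\[
u_f(x,t) = \tilde{E}^x\big[f(\xi_t)\mathrm{e}^{-\beta \tilde{L}_t}\big] + 2\,\tilde{E}^x\Big[\int_0^t \beta\, \mathrm{e}^{-\beta \tilde{L}_r}\, u_f(0,t-r)\,\mathrm{d}\tilde{L}_r\Big],
\]
where the right-hand side is expressed through the Brownian motion $\xi$ because the motion of a single particle up to its first fission has the law of $\xi$.

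The key step is to verify that $v_f$ satisfies this very same equation. For this I would use the elementary local-time integration-by-parts identity
\[
\mathrm{e}^{\beta \tilde{L}_t} = \mathrm{e}^{-\beta \tilde{L}_t} + 2\beta \int_0^t \mathrm{e}^{\beta (\tilde{L}_t - \tilde{L}_r)}\, \mathrm{e}^{-\beta \tilde{L}_r}\,\mathrm{d}\tilde{L}_r,
\]
obtained by differentiating $r \mapsto \mathrm{e}^{\beta \tilde{L}_t}\mathrm{e}^{-2\beta \tilde{L}_r}$ in the increasing, finite-variation process $\tilde{L}$. Multiplying by $f(\xi_t)$, taking Brownian expectations, and applying the strong Markov property at the times carried by the measure $\mathrm{d}\tilde{L}_r$ — at which $\xi_r = 0$, so that the conditional expectation of $f(\xi_t)\mathrm{e}^{\beta(\tilde{L}_t - \tilde{L}_r)}$ given $\mathcal{F}_r$ equals $v_f(0,t-r)$ — reproduces exactly the renewal equation above with $v_f$ in place of $u_f$. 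Uniqueness of the solution within the class of functions bounded on compact time intervals then follows from a standard Gronwall / Picard-iteration argument for this linear Volterra-type equation, yielding $u_f = v_f$.

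I expect the main obstacle to be the rigorous treatment of the integrals against the local-time measure $\mathrm{d}\tilde{L}_r$: both the first-fission decomposition and, more delicately, the use of the strong Markov property underneath $\int_0^t(\cdots)\,\mathrm{d}\tilde{L}_r$ require justifying that one may condition on $\mathcal{F}_r$ against this random measure and that $\xi_r = 0$ for $\mathrm{d}\tilde{L}_r$-almost every $r$. This is precisely where the theory of additive functionals of Brownian motion makes the informal branching potential $\beta\delta_0$ precise, as in \cite{CS07} and \cite{S08}. An equivalent route is the spine change-of-measure computation of Hardy and Harris, in which the weight $\mathrm{e}^{\beta \tilde{L}_t}$ appears as the Radon--Nikodym density converting the size-biased spine dynamics (fissions at rate $2\beta\delta_0$ along the spine) back to the law of a single Brownian motion; I would mention this as a sanity check on the constants.
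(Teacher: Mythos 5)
The paper itself does not prove this lemma: it treats \eqref{eq_many_to_one} as known, pointing to the spine constructions of \cite{HH09}, \cite{HR} and to the direct (non-spine) derivation in \cite{S08} (Lemma 3.3). Your renewal-equation argument is therefore a genuinely different, essentially self-contained route, and its core is sound. The local-time identity is correct: since $\tilde{L}$ is continuous and increasing, $\int_0^t \mathrm{e}^{-2\beta \tilde{L}_r}\,\mathrm{d}\tilde{L}_r = \bigl(1-\mathrm{e}^{-2\beta \tilde{L}_t}\bigr)/(2\beta)$, which is exactly your integration-by-parts formula after multiplying through by $\mathrm{e}^{\beta \tilde{L}_t}$; combined with the strong Markov property on the support of $\mathrm{d}\tilde{L}_r$ (where $\xi_r=0$), it shows that $v_f$ solves the same first-fission equation as $u_f$. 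In spirit this is close to the non-spine derivation of \cite{S08}, which identifies the first-moment semigroup with the Feynman--Kac semigroup of the additive functional $\tilde{L}$; the spine route of \cite{HH09}, \cite{HR} instead gets \eqref{eq_many_to_one} as a change-of-measure/projection statement, which costs more setup but generalizes immediately to many-to-few formulas (indeed the paper's Lemma \ref{many_to_two} comes from that circle of ideas), whereas your argument is more elementary but is tied to the first moment.

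Two points need attention before your argument is complete. The genuine gap is in the uniqueness step: you invoke uniqueness ``within the class of functions bounded on compact time intervals,'' but you never verify that $u_f$ belongs to that class, i.e.\ that $E^{x}\vert N_t\vert<\infty$ locally uniformly in $t$. This is not automatic here — the branching rate $\beta\delta_0$ is singular, so there is no domination by a finite-rate Yule process — and if $u_f$ were infinite the claimed identity would simply fail. The repair fits naturally into your scheme: let $u_f^{(n)}$ denote the contribution to $u_f$ of particles in the first $n$ generations; the first-fission decomposition gives the recursion $u_f^{(n+1)}(x,t) = \tilde{E}^x\bigl[f(\xi_t)\mathrm{e}^{-\beta\tilde{L}_t}\bigr] + 2\beta\,\tilde{E}^x\bigl[\int_0^t \mathrm{e}^{-\beta\tilde{L}_r}\,u_f^{(n)}(0,t-r)\,\mathrm{d}\tilde{L}_r\bigr]$, and since $v_f$ satisfies the same recursion with equality and $u_f^{(0)}\le v_f$, induction yields $u_f^{(n)}\le v_f$ for every $n$, whence $u_f=\lim_n u_f^{(n)}\le v_f<\infty$ by monotone convergence; this places $u_f$ in the uniqueness class and shows it solves the equation. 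Relatedly, note that the kernel $2\beta\,\nu(\mathrm{d}r)$ with $\nu(\mathrm{d}r)=\tilde{E}^0\bigl[\mathrm{e}^{-\beta\tilde{L}_r}\mathrm{d}\tilde{L}_r\bigr]$ has total mass $2\tilde{E}^0\bigl[1-\mathrm{e}^{-\beta\tilde{L}_t}\bigr]$, which can exceed $1$, so a naive contraction on a fixed interval fails; one needs the renewal-theoretic convolution-power argument (valid because $\nu(\{0\})=0$), which is what your Picard iteration should be spelled out as. Finally, the interchange of conditioning with integration against $\mathrm{d}\tilde{L}_r$, which you correctly flag as the main technical obstacle, can be justified either by approximating $\mathrm{d}\tilde{L}_r$ by $(2\epsilon)^{-1}\mathbf{1}_{\{\vert\xi_r\vert\le\epsilon\}}\,\mathrm{d}r$ and using the Markov property at deterministic times, or by optional projection for integrals against adapted increasing processes. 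With these additions your proof is correct.
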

For a detailed discussion of the spine approach to Many-to-One Lemma one may look at \cite{HH09} or \cite{HR}. For the derivation of \eqref{eq_many_to_one} without the spine construction see \cite{S08} (Lemma 3.3).

Let us also recall the $\tilde{P}$-martingale
\begin{equation}
\label{martingale_tilde}
\tilde{M}_t^\beta = \mathrm{e}^{- \beta |\xi_t| + \beta \tilde{L}_t - \frac{1}{2} \beta^2 t} 
= \mathrm{e}^{- \beta \int_0^t sgn (\xi_s) \mathrm{d} \xi_s - \frac{1}{2} \beta^2 t} \qquad \text{ , } t \geq 0
\end{equation}
discussed previously in \cite{BH14}. It is basically a Girsanov type martingale which, when used as the Radon-Nikodym derivative, has the effect of putting instantaneous drift $- \beta sgn(\cdot)$ (in other words, a drift of constant magnitude $\beta$ towards the origin) on $(\xi_t)_{t \geq 0}$ and from which the additive martingale \eqref{martingale} was constructed. The following result is taken from \cite{BS02} and we shall use it to simplify the evaluation of the right hand side in the formula \eqref{many_to_one}.
\begin{Proposition}
\label{transition_density}
Let $\tilde{Q}_\beta$ be the probability measure defined as 
\begin{equation}
\label{girsanov}
\frac{\mathrm{d}\tilde{Q}_\beta^{x_0}}{\mathrm{d} \tilde{P}^{x_0}} \Big\vert_{\sigma((\xi_s)_{0 \leq s \leq t})} = 
\dfrac{\tilde{M}_t^{\beta}}{\tilde{M}_0^{\beta}} = \mathrm{e}^{\beta |x_0|} \tilde{M}_t^\beta \qquad \text{ , } t \geq 0 \text{, } x_0 \in \mathbb{R}.
\end{equation}
Then under $\tilde{Q}_\beta$, $(\xi_t)_{t \geq 0}$ has the transition density (with respect to Lebesgue measure)
\begin{equation}
\label{density1}
p_t(x_0, x) = \frac{1}{\sqrt{2 \pi t}} \exp \Big\{ \beta \big( |x_0| - |x| \big) - \frac{\beta^2}{2} t - \frac{(x_0 - x)^2}{2t}
\Big\} + \beta \mathrm{e}^{- 2 \beta |x|} \Phi \Big( \frac{\beta t - |x_0| - |x|}{\sqrt{t}} \Big)
\end{equation}
so that for any set $A \subseteq \mathbb{R}$ and $t \geq 0$
\begin{equation}
\label{density2}
\tilde{Q}_\beta^{x_0} \big( \xi_t \in A \big) = \int_A p_t(x_0, x) \mathrm{d}x .
\end{equation}
\end{Proposition}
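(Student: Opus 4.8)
The plan is to reduce the law of $\xi_t$ under $\tilde{Q}_\beta$ to a statement about the joint law of a standard Brownian motion and its local time at the origin under $\tilde{P}$, and then evaluate the resulting integral by completing the square. By the definition \eqref{girsanov} of the change of measure together with \eqref{martingale_tilde}, for any Borel set $A$,
\[
\tilde{Q}_\beta^{x_0}(\xi_t \in A) = \tilde{E}^{x_0}\Big[ \mathbf{1}_{\{\xi_t \in A\}}\, \mathrm{e}^{\beta|x_0|}\tilde{M}_t^\beta \Big] = \mathrm{e}^{\beta|x_0| - \frac{1}{2}\beta^2 t}\, \tilde{E}^{x_0}\Big[ \mathbf{1}_{\{\xi_t \in A\}}\, \mathrm{e}^{-\beta|\xi_t| + \beta \tilde{L}_t}\Big].
\]
Since under $\tilde{P}^{x_0}$ the pair $(\xi_t, \tilde{L}_t)$ is simply (standard Brownian motion, its local time at $0$), the whole problem becomes the evaluation of an explicit expectation against the joint law of these two quantities.

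The second ingredient is the classical description of that joint law (exactly the kind of formula catalogued in \cite{BS02}). Per starting point $x_0$ and terminal point $x$, this law splits into two pieces. On $\{\tilde{L}_t = 0\}$ the motion has not touched the origin, $x$ lies on the same side of $0$ as $x_0$, and the position has the killed-Brownian-motion density $\frac{1}{\sqrt{2\pi t}}\big( \mathrm{e}^{-(x-x_0)^2/2t} - \mathrm{e}^{-(x+x_0)^2/2t}\big)$. On $\{\tilde{L}_t > 0\}$ there is a joint density in $(x, \ell)$, for $\ell > 0$, given by
\[
\frac{|x_0| + |x| + \ell}{\sqrt{2\pi t^3}}\, \exp\Big\{ -\frac{(|x_0| + |x| + \ell)^2}{2t}\Big\}.
\]
If a self-contained derivation is preferred, one obtains this by conditioning on the first hitting time of $0$ and convolving the hitting-time density $\frac{|x_0|}{\sqrt{2\pi s^3}}\mathrm{e}^{-x_0^2/2s}$ with the $x_0 = 0$ version of the density, whose correctness is confirmed by L\'evy's identity $\tilde{L}_t \stackrel{d}{=} |\xi_t|$.

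Substituting these into the expectation above, the key computation is the local-time integral $\int_0^\infty \mathrm{e}^{\beta \ell}\,\frac{a + \ell}{\sqrt{2\pi t^3}}\,\mathrm{e}^{-(a+\ell)^2/2t}\,\mathrm{d}\ell$ with $a = |x_0| + |x|$. Shifting by $u = a + \ell$, completing the square in the exponent, and splitting $u = (u - \beta t) + \beta t$ yields exactly two terms: a Gaussian term, which after the prefactor $\mathrm{e}^{\beta|x_0| - \frac{1}{2}\beta^2 t}\mathrm{e}^{-\beta|x|}$ is reinstated becomes $\frac{1}{\sqrt{2\pi t}}\mathrm{e}^{\beta(|x_0|-|x|) - \frac{1}{2}\beta^2 t}\mathrm{e}^{-(|x_0|+|x|)^2/2t}$, and a term that collapses to $\beta\, \mathrm{e}^{-2\beta|x|}\,\Phi\big((\beta t - |x_0| - |x|)/\sqrt{t}\big)$, which is already the second summand of \eqref{density1}. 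To produce the first summand I would add the Gaussian piece just obtained to the $\{\tilde{L}_t = 0\}$ contribution; factoring out the common prefactor $\frac{1}{\sqrt{2\pi t}}\mathrm{e}^{\beta(|x_0|-|x|) - \frac{1}{2}\beta^2 t}$ reduces matters to the elementary identity
\[
\mathrm{e}^{-(|x_0| + |x|)^2/2t} + \mathbf{1}_{\{x_0,\, x \text{ same sign}\}}\big(\mathrm{e}^{-(x-x_0)^2/2t} - \mathrm{e}^{-(x+x_0)^2/2t}\big) = \mathrm{e}^{-(x_0 - x)^2/2t},
\]
verified by separating the same-sign and opposite-sign cases. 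This recovers the first summand of \eqref{density1}, and \eqref{density2} is immediate.

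The genuinely substantive input is the joint density of position and local time; once it is in hand the remainder is bookkeeping of Gaussian integrals. The step I expect to require the most care is tracking the exponential prefactors through the square-completion so that the various $\mathrm{e}^{\pm \beta a}$ and $\mathrm{e}^{\pm\beta^2 t/2}$ factors cancel correctly, together with the sign-case analysis needed to absorb the killed-motion term into the final Gaussian.
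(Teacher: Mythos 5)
Your proof is correct; the interesting point of comparison is that the paper does not prove this proposition at all --- it is quoted from the handbook \cite{BS02}, the only justification being the preceding remark that the Radon--Nikodym derivative \eqref{girsanov} puts instantaneous drift $-\beta\,\mathrm{sgn}(\cdot)$ on $(\xi_t)_{t \geq 0}$ (Girsanov's theorem applied to the stochastic-integral form of $\tilde{M}^\beta_t$ in \eqref{martingale_tilde}), so that $p_t(x_0,x)$ is the tabulated transition density of that ``bang-bang'' diffusion. Your route is self-contained and bypasses Girsanov entirely: you evaluate $\tilde{E}^{x_0}\big[\mathbf{1}_{\{\xi_t\in A\}}\mathrm{e}^{\beta|x_0|}\tilde{M}^\beta_t\big]$ directly under the Wiener measure via the classical joint law of $(\xi_t,\tilde{L}_t)$, namely the killed (reflection-principle) density on the event $\{\tilde{L}_t=0\}$ plus the density $\frac{|x_0|+|x|+\ell}{\sqrt{2\pi t^3}}\mathrm{e}^{-(|x_0|+|x|+\ell)^2/2t}$ for $\ell>0$, the latter correctly reduced to the $x_0=0$ case by first-passage convolution. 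I verified the two computations that carry the load: with $a=|x_0|+|x|$, completing the square gives $\int_0^\infty \mathrm{e}^{\beta\ell}\,\frac{a+\ell}{\sqrt{2\pi t^3}}\,\mathrm{e}^{-(a+\ell)^2/2t}\,\mathrm{d}\ell = \frac{1}{\sqrt{2\pi t}}\mathrm{e}^{-a^2/2t}+\beta\,\mathrm{e}^{-\beta a+\beta^2 t/2}\,\Phi\big(\tfrac{\beta t-a}{\sqrt{t}}\big)$, which after reinstating the prefactor $\mathrm{e}^{\beta|x_0|-\beta^2 t/2-\beta|x|}$ yields exactly the $\Phi$-term of \eqref{density1}; and your sign-case identity merging $\mathrm{e}^{-a^2/2t}$ with the killed term into $\mathrm{e}^{-(x_0-x)^2/2t}$ holds in both cases (same signs: $a=|x+x_0|$; opposite signs or $x_0=0$: $a=|x_0-x|$). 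What your derivation buys is independence from the reference and a transparent probabilistic meaning for the two summands of \eqref{density1} (paths that never hit the origin versus paths weighted by accumulated local time); what the paper's citation buys is brevity, and it also sidesteps the one loose joint in your sketch: the $x_0=0$ joint density is not ``confirmed'' by the marginal identity $\tilde{L}_t\stackrel{d}{=}|\xi_t|$, which says nothing about the pair --- you need the joint L\'evy identity $(M_t-\xi_t,\,M_t)\stackrel{d}{=}(|\xi_t|,\,\tilde{L}_t)$ (with $M_t$ the running maximum) together with the reflection principle for $(M_t,\xi_t)$, a standard but genuinely stronger statement that you should invoke explicitly.
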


From Lemma \ref{many_to_one} and Proposition \ref{transition_density} we derive the following exact expression for the expected number of particles in the set $A$ at time $t$.
\begin{Proposition}
\label{first_moment}
For any $x_0 \in \mathbb{R}$, a Borel set $A \subseteq \mathbb{R}$ and $t \geq 0$ we have
\begin{equation}
\label{first_moment_eq}
E^{x_0} \big\vert N_t^A \big\vert = \int_A \frac{1}{\sqrt{2 \pi t}} \mathrm{e}^{- \frac{(x_0 - x)^2}{2t}} \mathrm{d}x 
+ \beta \mathrm{e}^{- \beta |x_0| + \frac{\beta^2}{2}t} \int_A \mathrm{e}^{- \beta |x|} \Phi \Big( 
\frac{\beta t - |x_0| - |x|}{\sqrt{t}} \Big) \mathrm{d}x .
\end{equation}
\end{Proposition}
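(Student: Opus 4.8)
The plan is to combine the Many-to-One Lemma with the Girsanov-type change of measure $\tilde{Q}_\beta$ from Proposition \ref{transition_density}; once these two ingredients are in place the entire proof reduces to careful bookkeeping of exponential factors, with a single genuinely important cancellation doing all the work.

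First I would apply Lemma \ref{many_to_one} with $f = \mathbf{1}_A$, giving
\[
E^{x_0}\big\vert N_t^A \big\vert = E^{x_0}\Big[ \sum_{u \in N_t} \mathbf{1}_A(X_t^u) \Big] = \tilde{E}^{x_0}\big[ \mathbf{1}_A(\xi_t)\, \mathrm{e}^{\beta \tilde{L}_t} \big].
\]
The obstruction in this expression is the local-time weight $\mathrm{e}^{\beta \tilde{L}_t}$, which cannot be integrated directly against the law of a $\tilde{P}$-Brownian motion. The crucial step is to pass to $\tilde{Q}_\beta$ from \eqref{girsanov}. Since $\mathrm{d}\tilde{Q}_\beta^{x_0}/\mathrm{d}\tilde{P}^{x_0} = \mathrm{e}^{\beta |x_0|}\tilde{M}_t^\beta = \mathrm{e}^{\beta |x_0|}\,\mathrm{e}^{-\beta |\xi_t| + \beta \tilde{L}_t - \frac{1}{2}\beta^2 t}$ by \eqref{martingale_tilde}, the reciprocal Radon--Nikodym derivative contributes a factor $\mathrm{e}^{-\beta |x_0|}\,\mathrm{e}^{\beta |\xi_t| - \beta \tilde{L}_t + \frac{1}{2}\beta^2 t}$, and the $\mathrm{e}^{-\beta \tilde{L}_t}$ appearing here exactly annihilates the $\mathrm{e}^{\beta \tilde{L}_t}$ produced by the Many-to-One formula. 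This is precisely the reason the martingale $\tilde{M}^\beta$ was chosen. After the cancellation I am left with
\[
E^{x_0}\big\vert N_t^A \big\vert = \mathrm{e}^{-\beta |x_0| + \frac{\beta^2}{2}t}\, \tilde{E}_{\tilde{Q}_\beta}^{x_0}\big[ \mathbf{1}_A(\xi_t)\, \mathrm{e}^{\beta |\xi_t|} \big],
\]
an expectation of a local functional of $\xi_t$ alone, with no local time surviving.

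The final step is purely computational. Using the explicit transition density \eqref{density1}, I would write
\[
\tilde{E}_{\tilde{Q}_\beta}^{x_0}\big[ \mathbf{1}_A(\xi_t)\, \mathrm{e}^{\beta |\xi_t|} \big] = \int_A \mathrm{e}^{\beta |x|}\, p_t(x_0,x)\, \mathrm{d}x
\]
and substitute the two summands of $p_t(x_0,x)$. In the first summand the weight $\mathrm{e}^{\beta |x|}$ cancels the $\mathrm{e}^{-\beta |x|}$ in the exponent, and in the second it reduces $\beta\,\mathrm{e}^{-2\beta |x|}$ to $\beta\,\mathrm{e}^{-\beta |x|}$; multiplying through by the prefactor $\mathrm{e}^{-\beta |x_0| + \frac{\beta^2}{2}t}$ then collapses the leftover $\mathrm{e}^{\beta |x_0|}$ and $\mathrm{e}^{-\frac{\beta^2}{2}t}$ terms and reproduces verbatim the two integrals in \eqref{first_moment_eq}.

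I do not expect any real obstacle. The only points needing care are the exact tracking of the exponential prefactors, in particular the $\mathrm{e}^{\beta |x_0|}$ carried by the change of measure, and a one-line remark on integrability for arbitrary Borel $A$: the first integrand is a constant multiple of a Gaussian density and the second is dominated by $\beta\,\mathrm{e}^{-\beta |x|}$, both integrable over $\mathbb{R}$, so the expression is finite and the manipulations are justified. All of the genuine content lies in the local-time cancellation engineered by the choice of $\tilde{Q}_\beta$.
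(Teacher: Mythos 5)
Your proposal is correct and follows essentially the same route as the paper: Many-to-One with $f=\mathbf{1}_A$, then the change of measure \eqref{girsanov} whose reciprocal Radon--Nikodym derivative cancels the local-time weight $\mathrm{e}^{\beta \tilde{L}_t}$, and finally integration of $\mathrm{e}^{\beta|x|}p_t(x_0,x)$ over $A$ using \eqref{density1}. The exponential bookkeeping you describe matches the paper's computation exactly, and your added remark on integrability is a harmless (correct) extra.
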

\begin{proof}
Applying Lemma \ref{many_to_one} and the change of measure \eqref{girsanov} we obtain
\begin{align*}
E^{x_0} \big\vert N_t^A \big\vert = &E^{x_0} \Big[ \sum_{u \in N_t} \mathbf{1}_A(X^u_t) \Big]\\
= &\tilde{E}^{x_0} \Big[ \mathbf{1}_A(\xi_t) \mathrm{e}^{\beta \tilde{L}_t} \Big]\\
= &\tilde{Q}_\beta^{x_0} \Big[ \frac{\tilde{M}^\beta_0}{\tilde{M}^\beta_t} \mathbf{1}_A(\xi_t) \mathrm{e}^{\beta \tilde{L}_t} \Big]\\
= &\mathrm{e}^{- \beta|x_0| + \frac{\beta^2}{2}t} \tilde{Q}_\beta^{x_0} \Big[ \mathbf{1}_A(\xi_t) \mathrm{e}^{\beta |\xi_t|} \Big].
\end{align*}
Then substituting the formula for $\tilde{Q}_\beta$-transition density of $(\xi_t)_{t \geq 0}$ \eqref{density1} we get the sought expression:
\begin{align*}
\mathrm{e}^{- \beta|x_0| + \frac{\beta^2}{2}t} \tilde{Q}_\beta^{x_0} \Big[ \mathbf{1}_A(\xi_t) \mathrm{e}^{\beta |\xi_t|} \Big] = &\mathrm{e}^{- \beta|x_0| + \frac{\beta^2}{2}t} \int_A \mathrm{e}^{\beta |x|} p_t(x_0, x) \mathrm{d}x\\
= &\int_A \frac{1}{\sqrt{2 \pi t}} \mathrm{e}^{- \frac{(x_0 - x)^2}{2t}} \mathrm{d}x\\ 
&+ \beta \mathrm{e}^{- \beta |x_0| + \frac{\beta^2}{2}t} \int_A \mathrm{e}^{- \beta |x|} \Phi \Big( 
\frac{\beta t - |x_0| - |x|}{\sqrt{t}} \Big) \mathrm{d}x .
\end{align*}
\end{proof}
Let us now derive a number of estimates from \eqref{first_moment_eq} for later use.
\begin{Corollary}
\label{exp_pop}
For any $x_0 \in \mathbb{R}$ and $t \geq 0$
\begin{equation}
\label{eq_exp_pop}
E^{x_0}|N_t| \leq 1 + 2 \mathrm{e}^{- \beta |x_0| + \frac{\beta^2}{2}t}.
\end{equation}
\end{Corollary}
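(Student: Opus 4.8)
The plan is to apply Proposition \ref{first_moment} directly with the choice $A = \mathbb{R}$, since $|N_t| = |N_t^{\mathbb{R}}|$. Substituting $A = \mathbb{R}$ into the exact expression \eqref{first_moment_eq} produces two terms, and I would estimate each of them separately.

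First I would observe that the leading term is simply the integral of the Gaussian transition density over the whole real line,
\[
\int_{\mathbb{R}} \frac{1}{\sqrt{2 \pi t}} \mathrm{e}^{- \frac{(x_0 - x)^2}{2t}} \, \mathrm{d}x = 1,
\]
being the total mass of a normal distribution. This accounts for the constant $1$ in the claimed bound. For the second term, the key observation is that $\Phi(\cdot)$ is a cumulative distribution function and hence bounded above by $1$, uniformly in $x_0$, $x$ and $t$. Discarding this factor gives
\[
\beta \mathrm{e}^{- \beta |x_0| + \frac{\beta^2}{2}t} \int_{\mathbb{R}} \mathrm{e}^{- \beta |x|} \Phi \Big( \frac{\beta t - |x_0| - |x|}{\sqrt{t}} \Big) \, \mathrm{d}x \leq \beta \mathrm{e}^{- \beta |x_0| + \frac{\beta^2}{2}t} \int_{\mathbb{R}} \mathrm{e}^{- \beta |x|} \, \mathrm{d}x.
\]
The remaining integral is elementary, $\int_{\mathbb{R}} \mathrm{e}^{- \beta |x|} \, \mathrm{d}x = \tfrac{2}{\beta}$, so the second term is at most $2 \mathrm{e}^{- \beta |x_0| + \frac{\beta^2}{2}t}$. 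Adding the two estimates yields the assertion.

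I expect no genuine obstacle here: the corollary is an immediate consequence of the first-moment formula, and the only ingredients are the normalisation of the Gaussian density and the trivial bound $\Phi \leq 1$. The one point worth noting is that throwing away the $\Phi$ factor is rather wasteful (it is close to $0$ when $|x|$ is large relative to $\beta t$), so the bound is far from sharp for small $t$; but since the statement only requires an upper bound of this specific form, which is exactly what is needed in the subsequent second-moment and truncation arguments, no finer estimate is called for.
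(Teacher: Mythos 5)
Your proposal is correct and follows essentially the same route as the paper: substitute $A = \mathbb{R}$ into the exact first-moment formula, note the Gaussian term integrates to $1$, and bound $\Phi \leq 1$ to evaluate $\int_{\mathbb{R}} \beta \mathrm{e}^{-\beta|x|}\,\mathrm{d}x = 2$. The paper merely uses symmetry to write the second integral as twice the integral over $(0,\infty)$ before applying $\Phi \leq 1$, which is the same computation in different clothing.
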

\begin{proof}
By substituting $A = \mathbb{R}$ in \eqref{first_moment_eq} and using symmetry in the second integral we get 
\begin{align*}
E^{x_0}|N_t| = &1 + 2 \beta \mathrm{e}^{- \beta |x_0| + \frac{\beta^2}{2}t} \int_0^\infty \mathrm{e}^{- \beta x} 
\Phi \Big( \frac{\beta t - |x_0| - x}{\sqrt{t}} \Big) \mathrm{d}x\\
\leq &1 + 2 \beta \mathrm{e}^{- \beta |x_0| + \frac{\beta^2}{2}t} \int_0^\infty \mathrm{e}^{- \beta x} \mathrm{d}x\\
= &1 + 2 \mathrm{e}^{- \beta |x_0| + \frac{\beta^2}{2}t}.
\end{align*}
Out of interest one may also evaluate the above integral exactly and find that
\[
E^{x_0}|N_t| = 1 + 2 \mathrm{e}^{- \beta |x_0| + \frac{\beta^2}{2}t} \Phi \Big( \beta \sqrt{t} - \frac{|x_0|}{\sqrt{t}}\Big) - 2 \Phi \Big( - \frac{|x_0|}{\sqrt{t}} \Big).
\]
\end{proof}

\begin{Corollary}
\label{exp_estimate}
Let $A$, $B \subseteq \mathbb{R}$ be Borel sets such that $\inf A$, $\inf B \geq 0$ and suppose that $x_0 = 0$. Then for any $t \geq 0$
\begin{equation}
\label{eq_exp_estimate}
E \big\vert N_t^{A\cup(-B)} \big\vert \leq \big( \mathrm{e}^{- \beta \inf A} + \mathrm{e}^{- \beta \inf B}\big) \mathrm{e}^{\frac{\beta^2}{2}t}.
\end{equation}
\end{Corollary}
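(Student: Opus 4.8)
The plan is to reduce the union to two half-lines and then establish the single key estimate
\[
E^0\big\vert N_t^{[a,\infty)}\big\vert \leq \mathrm{e}^{-\beta a}\,\mathrm{e}^{\frac{\beta^2}{2}t}\qquad(a\geq 0),
\]
after which the corollary follows by symmetry and subadditivity. For the reduction, I would first note the elementary bound $\big\vert N_t^{A\cup(-B)}\big\vert \leq \big\vert N_t^A\big\vert + \big\vert N_t^{-B}\big\vert$, so that by linearity of expectation it suffices to control the two terms separately. Since $\inf A = a \geq 0$ we have $A \subseteq [a,\infty)$ and hence $E^0\vert N_t^A\vert \leq E^0\vert N_t^{[a,\infty)}\vert$; likewise, writing $b = \inf B \geq 0$, we have $-B \subseteq (-\infty,-b]$, and because the right-hand side of \eqref{first_moment_eq} depends on $x$ only through $|x|$ (recall $x_0 = 0$), it is invariant under $x \mapsto -x$, giving $E^0\vert N_t^{-B}\vert \leq E^0\vert N_t^{(-\infty,-b]}\vert = E^0\vert N_t^{[b,\infty)}\vert$. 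Applying the displayed half-line estimate with $a$ and with $b$ then yields exactly the claimed bound.

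It remains to prove the half-line estimate, and this is where the real content lies. Applying Proposition \ref{first_moment} with $x_0 = 0$ and the set $[a,\infty)$ (so that $|x| = x$ throughout), I would integrate the second integral by parts, differentiating the factor $\Phi\big(\tfrac{\beta t - x}{\sqrt t}\big)$ and integrating $\beta\mathrm{e}^{-\beta x}$. The boundary term contributes $\mathrm{e}^{-\beta a}\Phi\big(\tfrac{\beta t - a}{\sqrt t}\big)$ (the contribution at $+\infty$ vanishes since both $\mathrm{e}^{-\beta x}$ and $\Phi\big(\tfrac{\beta t - x}{\sqrt t}\big)$ tend to $0$), while the remaining integral is, up to the factor $-\tfrac{1}{\sqrt t}$, equal to $\int_a^\infty \mathrm{e}^{-\beta x}\Phi'\big(\tfrac{\beta t - x}{\sqrt t}\big)\,\mathrm{d}x$. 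Completing the square in the exponent of $\mathrm{e}^{-\beta x}\Phi'\big(\tfrac{\beta t - x}{\sqrt t}\big)$ shows this product equals $\mathrm{e}^{-\frac{\beta^2}{2}t}$ times the standard Gaussian density at $x$, so that after multiplying back by $\beta\mathrm{e}^{\frac{\beta^2}{2}t}$ this term reproduces the first (Gaussian) integral in \eqref{first_moment_eq} with the opposite sign. The two Gaussian contributions therefore cancel exactly, leaving
\[
E^0\big\vert N_t^{[a,\infty)}\big\vert = \mathrm{e}^{-\beta a}\,\mathrm{e}^{\frac{\beta^2}{2}t}\,\Phi\!\Big(\frac{\beta t - a}{\sqrt t}\Big),
\]
and bounding $\Phi \leq 1$ gives the half-line estimate.

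The point to be careful about — and the reason a cruder argument fails — is precisely this cancellation: one cannot simply bound $\Phi \leq 1$ inside the second integral and discard the first, because that first integral is strictly positive and of exactly the order the target bound cannot afford, so it must be kept and absorbed through the integration by parts. As a sanity check, taking $a = b = 0$ reduces the half-line formula to $E^0\vert N_t\vert = 2\mathrm{e}^{\frac{\beta^2}{2}t}\Phi(\beta\sqrt t)$, in agreement with the exact expression in Corollary \ref{exp_pop}, which confirms the cancellation is genuine rather than an artefact of over-bounding. Everything else — the set monotonicity, the reflection symmetry, and the final assembly $E^0\vert N_t^{A\cup(-B)}\vert \leq (\mathrm{e}^{-\beta a} + \mathrm{e}^{-\beta b})\mathrm{e}^{\frac{\beta^2}{2}t}$ — is routine.
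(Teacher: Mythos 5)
Your proof is correct and follows essentially the same route as the paper: reduce to half-lines by monotonicity and reflection symmetry, use \eqref{first_moment_eq} with an integration by parts in which the Gaussian term cancels exactly, yielding $E^0\big\vert N_t^{[a,\infty)}\big\vert = \mathrm{e}^{\frac{\beta^2}{2}t-\beta a}\,\Phi\big(\beta\sqrt{t}-\tfrac{a}{\sqrt{t}}\big)$, then bound $\Phi\leq 1$ and add the two contributions. The only cosmetic difference is that you use subadditivity of $\vert N_t^{A\cup(-B)}\vert$ where the paper invokes disjointness of $A$ and $-B$ to write equality; both are fine.
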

\begin{proof}
From the fact that $A \subseteq [\inf A , \infty)$, equation \eqref{first_moment_eq} and the integration-by-parts formula we get that
\begin{align*}
E|N_t^A| \leq &E|N_t^{[\inf A, \infty)}|\\
= &\int_{\inf A}^\infty \frac{1}{\sqrt{2 \pi t}} \mathrm{e}^{- \frac{x^2}{2t}} \mathrm{d}x + \beta 
\mathrm{e}^{\frac{\beta^2}{2}t} \int_{\inf A}^\infty \mathrm{e}^{- \beta x} \Phi \Big( \beta \sqrt{t} - \frac{x}{\sqrt{t}}\Big) \mathrm{d}x\\
= &\Phi\Big( \beta \sqrt{t} - \frac{\inf A}{\sqrt{t}}\Big) \mathrm{e}^{\frac{\beta^2}{2}t - \beta \inf A}\\
\leq &\mathrm{e}^{\frac{\beta^2}{2}t - \beta \inf A}.
\end{align*}
By symmetry it follows that
\[
E|N_t^{-B}| = E|N_t^B| \leq \mathrm{e}^{\frac{\beta^2}{2}t - \beta \inf B}.
\]
Then since $A$ and $-B$ are disjoint we have that
\[
E \big\vert N_t^{A\cup(-B)} \big\vert = E|N_t^A| + E|N_t^{-B}| \leq 
\big( \mathrm{e}^{- \beta \inf A} + \mathrm{e}^{- \beta \inf B}\big) \mathrm{e}^{\frac{\beta^2}{2}t}.
\]
\end{proof}

\begin{Corollary}
\label{estimate_exp}
Take any real numbers $\lambda \in (0, \beta)$ and $K > 0$, sets $A$, $B \subseteq \mathbb{R}$ satisfying $\inf A$, $\inf B > - \infty$ and a function $s : [0, \infty) \to [0, \infty)$ such that $s(t) = o(t)$ as $t \to \infty$ and $t - s(t) \geq 0$ for all $t \geq 0$. 

Then for any choice of the above quantities there exist functions $\theta_1(\cdot)$, $\theta_2(\cdot) : [0, \infty) \to [0, \infty)$ satisfying $\theta_1(t)$, $\theta_2(t) \to 1$ as $t \to \infty$ such that for any $t \geq 0$ and $x_0 \in \mathbb{R}$ with $|x_0| < K s(t)$ it is true that
\begin{align}
\label{estimate1}
&E^{x_0} \big\vert N_{t-s}^{(A + \lambda t)\cup(-B - \lambda t)} \big\vert \geq \big( \mu(A) + \mu(B) \big) \mathrm{e}^{- \beta |x_0| - \frac{\beta^2}{2} s + \Delta_\lambda t} \theta_1(t), \nonumber\\
&E^{x_0} \big\vert N_{t-s}^{(A + \lambda t)\cup(-B - \lambda t)} \big\vert \leq \big( \mu(A) + \mu(B) \big) \mathrm{e}^{- \beta |x_0| - \frac{\beta^2}{2} s + \Delta_\lambda t} \theta_2(t),
\end{align}
where $s = s(t)$. 
\end{Corollary}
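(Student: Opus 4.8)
The plan is to feed the exact first-moment identity \eqref{first_moment_eq} of Proposition \ref{first_moment}, applied at time $t-s$, into the translated sets $A+\lambda t$ and $-B-\lambda t$, and to control its two summands (the ``Gaussian'' term and the ``$\Phi$'' term) separately. Since $\inf A,\inf B>-\infty$, for all sufficiently large $t$ we have $A+\lambda t\subseteq(0,\infty)$ and $-B-\lambda t\subseteq(-\infty,0)$; in particular these sets are disjoint, so $E^{x_0}\big\vert N_{t-s}^{(A+\lambda t)\cup(-B-\lambda t)}\big\vert=E^{x_0}\big\vert N_{t-s}^{A+\lambda t}\big\vert+E^{x_0}\big\vert N_{t-s}^{-B-\lambda t}\big\vert$, and on each set $|x|$ collapses to $\pm x$. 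By the reflection symmetry of \eqref{first_moment_eq} the $-B-\lambda t$ contribution is the $A\mapsto B$ analogue of the $A+\lambda t$ one, so it suffices to analyse the latter.

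For the second (main) term I would substitute $x=y+\lambda t$ with $y\in A$. Using $\Delta_\lambda=\tfrac{\beta^2}{2}-\beta\lambda$ (valid since $\lambda<\beta$, see \eqref{delta}), the exponent $\tfrac{\beta^2}{2}(t-s)-\beta\lambda t$ becomes exactly $\Delta_\lambda t-\tfrac{\beta^2}{2}s$, so this term equals $e^{-\beta|x_0|+\Delta_\lambda t-\frac{\beta^2}{2}s}\,I_A(t,x_0)$ with $I_A(t,x_0):=\int_A\beta e^{-\beta y}\,\Phi\big(\tfrac{(\beta-\lambda)t-\beta s-|x_0|-y}{\sqrt{t-s}}\big)\,\diffd y$. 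The goal is to show $I_A(t,x_0)\to\mu(A)$ uniformly over $|x_0|<Ks(t)$. The upper bound $I_A\le\mu(A)$ is immediate from $\Phi\le1$. For the lower bound, note the $\Phi$-argument is decreasing in $|x_0|$, so replacing $|x_0|$ by its extreme value $Ks$ gives an $x_0$-free lower bound $\underline I_A(t)\le I_A(t,x_0)$; since $\lambda<\beta$ and $|x_0|,s=o(t)$, the argument tends to $+\infty$ for each fixed $y$, and dominated convergence (with integrable majorant $\beta e^{-\beta y}\Ind_A(y)$, finite because $\inf A>-\infty$) yields $\underline I_A(t)\to\mu(A)$.

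It remains to discard the first (Gaussian) term. Bounding it by the Gaussian tail $\Phi\big(-\tfrac{\inf A+\lambda t-x_0}{\sqrt{t-s}}\big)$ and using the standard Mills estimate shows it is of order $e^{-\frac{\lambda^2}{2}t(1+o(1))}$, uniformly in $|x_0|<Ks(t)$. Dividing by the main prefactor $e^{-\beta|x_0|+\Delta_\lambda t-\frac{\beta^2}{2}s}$ (whose exponent is $\Delta_\lambda t-o(t)$ on the allowed range of $x_0$) produces a factor $e^{(-\frac{\lambda^2}{2}-\Delta_\lambda)t+o(t)}=e^{-\frac12(\beta-\lambda)^2t+o(t)}\to0$. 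This is the crux of the argument, and it is precisely here that the hypothesis $\lambda<\beta$ is indispensable: for $\lambda\ge\beta$ one has $\Delta_\lambda=-\tfrac{\lambda^2}{2}$ and the Gaussian term is no longer negligible, which is exactly the obstruction flagged in Remark \ref{rem_main2}.

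Finally I would define $\theta_1(t)$ and $\theta_2(t)$ as the infimum and supremum over $|x_0|<Ks(t)$ of the ratio $E^{x_0}\big\vert N_{t-s}^{(A+\lambda t)\cup(-B-\lambda t)}\big\vert\big/\big[(\mu(A)+\mu(B))e^{-\beta|x_0|+\Delta_\lambda t-\frac{\beta^2}{2}s}\big]$; the two inequalities in \eqref{estimate1} then hold by construction, and the estimates above give $\theta_1(t),\theta_2(t)\to1$ (for small $t$ the translated sets may straddle the origin, but one simply retains these defining formulas there, which does not affect the $t\to\infty$ limit). The only genuine work is the uniform-in-$x_0$ bookkeeping, handled by the monotonicity reduction to $|x_0|=Ks$, together with the rate comparison of the previous paragraph.
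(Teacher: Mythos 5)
Your proposal is correct and takes essentially the same route as the paper: both feed the exact first-moment formula of Proposition \ref{first_moment} (at time $t-s$) into the translated sets, substitute $x=y+\lambda t$ to expose the prefactor $\mathrm{e}^{-\beta|x_0|-\frac{\beta^2}{2}s+\Delta_\lambda t}$, bound the $\Phi$-integral above by $\mu(A)$ and below by a quantity tending to $\mu(A)$ uniformly over $|x_0|<Ks(t)$, show the Gaussian term is negligible precisely because $-\frac{\lambda^2}{2}<\Delta_\lambda$ (i.e.\ $(\beta-\lambda)^2>0$), and finish by symmetry in $B$ and disjointness of the two translated sets for large $t$. The only cosmetic difference is your lower bound (monotonicity in $|x_0|$ plus dominated convergence) where the paper instead truncates $A$ at $\epsilon t$ and uses a uniform lower bound on $\Phi$ over the truncated set; both arguments are equally valid.
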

\begin{proof}
Let us first establish \eqref{estimate1} for $E^{x_0} \big\vert N_{t-s}^{A + \lambda t} \big\vert$. 
Take $\lambda$, $K$, $A$ and $s(\cdot)$ as above. From \eqref{first_moment_eq} we have that for all 
$t \geq 0$
\begin{align}
\label{I_II}
E^{x_0} \big\vert N_{t-s}^{A+\lambda t} \big\vert = &\int_{A + \lambda t} \frac{1}{\sqrt{2 \pi (t-s)}} \mathrm{e}^{- \frac{(x_0 - x)^2}{2(t-s)}} \mathrm{d}x \nonumber\\ 
+ &\mathrm{e}^{- \beta |x_0| + \frac{\beta^2}{2}(t-s)} \int_{A + \lambda t} \Phi \Big( 
\frac{\beta (t-s) - |x_0| - |x|}{\sqrt{t-s}} \Big) \beta \mathrm{e}^{- \beta |x|} \mathrm{d}x.
\end{align}
Let us denote the first integral on the RHS of \eqref{I_II} by (I) and the second one by (II). Then for $|x_0| < K s$ we have
\begin{align*}
(I) = &\int_{A + \lambda t} \frac{1}{\sqrt{2 \pi (t-s)}} \mathrm{e}^{- \frac{(x_0 - x)^2}{2(t-s)}} \mathrm{d}x\\ 
= &\mathbb{P} \Big( \mathcal{N}(x_0, t - s) \in A + \lambda t\Big)\\
\leq &\mathbb{P} \Big( \mathcal{N}(x_0, t - s) \geq \inf A + \lambda t\Big)\\
\leq &\mathbb{P} \Big( \mathcal{N}(0,1) \geq \frac{\inf A - K s + \lambda t}{\sqrt{t-s}} \Big),
\end{align*}
where $\mathcal{N}(\mu, \sigma^2)$ is a random variable with mean $\mu$ and variance $\sigma^2$. Thus using the estimate of the tail of the normal distribution as well as the defining property of $s(\cdot)$ we get
\begin{equation}
\label{integral1}
(I) \leq \theta_3(t) \mathrm{e}^{- \frac{\lambda^2}{2}t},
\end{equation}
where $\theta_3(\cdot)$ is some function with subexponential growth rate (that is, for any $\delta > 0$, 
$\theta_3(t) \mathrm{e}^{- \delta t} \to 0$ as $t \to \infty$).

Also, for all $t$ large enough so that $\inf A + \lambda t > 0$ we have that 
\begin{align*}
(II) = &\mathrm{e}^{- \beta |x_0| + \frac{\beta^2}{2}(t-s)} \int_{A + \lambda t} \Phi \Big( 
\frac{\beta (t-s) - |x_0| - x}{\sqrt{t-s}} \Big) \beta \mathrm{e}^{- \beta x} \mathrm{d}x\\
= &\mathrm{e}^{- \beta |x_0| - \frac{\beta^2}{2}s + \Delta_\lambda t} \int_A \Phi \Big( 
\frac{(\beta - \lambda)t - \beta s - |x_0| - y}{\sqrt{t-s}} \Big) \beta \mathrm{e}^{- \beta y} \mathrm{d}y
\end{align*}
by making substitution $x = y + \lambda t$ in the last line. We then observe that since $\Phi(\cdot) \leq 1$,
\[
\int_A \Phi \Big( \frac{(\beta - \lambda)t - \beta s - |x_0| - y}{\sqrt{t-s}} \Big) \beta \mathrm{e}^{- \beta y} \mathrm{d}y \leq \mu(A)
\]
and that for any $\epsilon \in (0 , \beta - \lambda)$ and $x_0$ such that $|x_0| < K s$,
\begin{align*}
&\int_A \Phi \Big( \frac{(\beta - \lambda)t - \beta s - |x_0| - y}{\sqrt{t-s}} \Big) \beta \mathrm{e}^{- \beta y} \mathrm{d}y\\ 
\geq &\int_{A \cap (- \infty, \epsilon t]} \Phi \Big( \frac{(\beta - \lambda)t - \beta s - |x_0| - y}{\sqrt{t-s}} \Big) \beta \mathrm{e}^{- \beta y} \mathrm{d}y\\
\geq &\Phi \Big( \frac{(\beta - \lambda - \epsilon)t - (\beta + K) s}{\sqrt{t-s}} \Big) \mu \big( A \cap (- \infty, \epsilon t] \big)\\
\to &\mu(A) \qquad \text{ as } t \to \infty.
\end{align*}
Hence for all t large enough
\[
\theta_4(t) \mu(A) \mathrm{e}^{- \beta |x_0| + \Delta_\lambda t - \frac{\beta^2}{2}s} \leq (II) 
\leq \mu(A) \mathrm{e}^{- \beta |x_0| + \Delta_\lambda t - \frac{\beta^2}{2}s}
\]
for some function $\theta_4(\cdot)$ such that $\theta_4(t) \to 1$ as $t \to \infty$.

Noting that $- \frac{\lambda^2}{2} < \Delta_\lambda$ we see from \eqref{integral1} that (I) makes a vanishigly small contribution to \eqref{I_II} thus establishing inequality \eqref{estimate1} for $E^{x_0} \big\vert N_{t-s}^{A+\lambda t} \big\vert= (I) + (II)$.

Then by symmetry we have 
\[
E^{x_0} \big\vert N_{t-s}^{-B - \lambda t} \big\vert = E^{-x_0} \big\vert N_{t-s}^{B + \lambda t} \big\vert. 
\]
So $E^{x_0} \big\vert N_{t-s}^{-B - \lambda t} \big\vert$ satisfies inequalities \eqref{estimate1} as well. Also, for $t$ large enough $A + \lambda t$ and $-B - \lambda t$ are disjoint so that 
\[
E^{x_0} \big\vert N_{t-s}^{(A + \lambda t)\cup(-B - \lambda t)} \big\vert = 
E^{x_0} \big\vert N_{t-s}^{A + \lambda t} \big\vert + E^{x_0} \big\vert N_{t-s}^{-B - \lambda t} \big\vert
\]
proving inequalities \eqref{estimate1} for $E^{x_0} \big\vert N_{t-s}^{(A + \lambda t)\cup(-B - \lambda t)} \big\vert$ .
\end{proof}
\subsection{Second moment calculations}
It is also possible to extend the original probability space of the branching process by adding two independent spine processes to it. If we let $\tilde{P}_2$ denote the extension of the original probability measure $P$ to this larger probability space and if for every $t \geq 0$ we let $\xi_t^{(1)}$ and $\xi_t^{(2)}$ denote the spatial positions of the two spine particles at time $t$ then one can check that $(\xi_t^{(1)})_{t \geq 0}$ and $(\xi_t^{(2)})_{t \geq 0}$ are two (correlated) Brownian motions under $\tilde{P}_2$. One can then write the formula for the second moment of $\sum_{u \in N_t} f(X^u_t)$ in terms of these two spine processes which, as shown in \cite{BH16}, reduces to the following result.
\begin{Lemma}[Many-to-Two Lemma]
\label{many_to_two}
Let $f : \mathbb{R} \to \mathbb{R}$ be a sufficiently nice function as in Lemma \ref{many_to_one}. Then
\begin{equation}
\label{eq_many_to_two}
E^{x_0} \Big[ \sum_{u \in N_t} f(X^u_t) \Big]^2 = S^{x_0}_{f^2}(t) + 
2 \tilde{E}^{x_0}_2 \Big( \int_0^t \big[ S^0_f(t - \tau) \big]^2 \mathrm{d} \big( \mathrm{e}^{\beta \tilde{L}_\tau^{(1)}} \big)   \Big) \text{,}
\end{equation}
where $\tilde{E}_2$ is the expectation function corresponding to the probability measure $\tilde{P}_2$, $(\tilde{L}_t^{(1)})_{t \geq 0}$ is the local time at the origin of $(\xi_t^{(1)})_{t \geq 0}$ and 
\begin{equation}
\label{s_f}
S^{x_0}_f(t) = E^{x_0} \Big[ \sum_{u \in N_t} f(X^u_t) \Big]
\end{equation}
(which can be computed using \eqref{eq_many_to_one})
\end{Lemma}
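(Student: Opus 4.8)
The plan is to expand the square, separate the diagonal from the off-diagonal contributions, and handle the off-diagonal sum by decomposing over the genealogy. First I would write
\[
\Big(\sum_{u \in N_t} f(X^u_t)\Big)^2 = \sum_{u \in N_t} f(X^u_t)^2 + \sum_{\substack{u,v \in N_t \\ u \neq v}} f(X^u_t)\, f(X^v_t).
\]
Taking $E^{x_0}$, the diagonal sum is exactly $S^{x_0}_{f^2}(t)$ by the definition \eqref{s_f} (equivalently, by Lemma \ref{many_to_one} applied to $f^2$). It therefore remains to show that the expectation of the off-diagonal term equals the integral expression appearing in \eqref{eq_many_to_two}.

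For the off-diagonal term I would condition on the genealogy. Any two distinct particles $u,v \in N_t$ have a most recent common ancestor whose fission occurs at some time $\tau \in (0,t)$; because reproduction in this model is driven entirely by the local time at the origin, every such fission takes place \emph{at} the origin. Grouping the ordered pairs $(u,v)$ according to the branch event $b$ at which their lineages diverge and invoking the branching property, the two subtrees emanating from the two children created at $b$ are, conditionally on $\mathcal{F}_{\tau_b}$, independent branching Brownian motions started from the origin at time $\tau_b$. Hence the conditional expectation of the product of their $f$-sums is $\big[S^0_f(t-\tau_b)\big]^2$, and accounting for the two orderings of the pair gives
\[
E^{x_0}\Big[\sum_{\substack{u,v \in N_t \\ u \neq v}} f(X^u_t)\, f(X^v_t)\Big] = 2\, E^{x_0}\Big[\sum_{b\, :\, \tau_b \le t} \big[S^0_f(t-\tau_b)\big]^2\Big],
\]
where the inner sum runs over all fission events $b$ in the tree up to time $t$.

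The remaining task is to evaluate this expected weighted sum over branch events, and this is where the single spine enters. I would establish a many-to-one identity for fissions, namely that for nonnegative $g$,
\[
E^{x_0}\Big[\sum_{b\, :\, \tau_b \le t} g(\tau_b)\Big] = \tilde{E}^{x_0}\Big[\int_0^t g(\tau)\, \mathrm{d}\big(\mathrm{e}^{\beta \tilde{L}_\tau}\big)\Big].
\]
One checks consistency by taking $g \equiv 1$: since the model has no pure deaths, the left side is $E^{x_0}|N_t| - 1$, while the right side is $\tilde{E}^{x_0}[\mathrm{e}^{\beta \tilde{L}_t}] - 1$, and these agree by Lemma \ref{many_to_one} with $f \equiv 1$. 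Applying the identity with $g(\tau) = [S^0_f(t-\tau)]^2$, and noting that the integrand depends only on the law of a single Brownian path so that the single-spine expectation may be rewritten as the two-spine expectation $\tilde{E}_2^{x_0}$ through the marginal of $\xi^{(1)}$, yields precisely \eqref{eq_many_to_two}.

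The main obstacle is the rigorous justification of the branch-event many-to-one identity. The integrator $\mathrm{d}(\mathrm{e}^{\beta \tilde{L}_\tau}) = \beta\, \mathrm{e}^{\beta \tilde{L}_\tau}\, \mathrm{d}\tilde{L}_\tau$ is a Stieltjes measure carried by the Lebesgue-null zero set of the spine, so the counting of fissions must be matched against the local-time clock and the spine change of measure \eqref{girsanov} with care, alongside the integrability and Fubini-type checks needed to pass the expectation through the infinite genealogical sum. This is exactly the reduction of the general two-spine second-moment expression that is carried out in \cite{BH16}, and it is the one step I expect to require genuine work rather than bookkeeping.
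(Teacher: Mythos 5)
Your proposal cannot be matched line-by-line against the paper, because the paper does not prove this lemma at all: \eqref{eq_many_to_two} is imported from \cite{BH16}, where it comes out of a two-spine change-of-measure computation (in the spirit of the many-to-few lemma of \cite{HR}), with \cite{S08} cited for a spine-free derivation. Your argument is the direct classical route that underlies those references, and its skeleton is sound: the diagonal term gives $S^{x_0}_{f^2}(t)$; the off-diagonal sum decomposes over the most recent common ancestor; since every fission in this catalytic model occurs at the origin, the two subtrees born at a fission at time $\tau_b$ are conditionally independent copies of the process started at $0$, which produces the factor $\big[S^0_f(t-\tau_b)\big]^2$ (this is also exactly why the paper can remark that $S^0_f$ and $S^{\xi^{(1)}_\tau}_f$ are interchangeable in the integrand); indexing fissions by the Ulam--Harris label of the dying particle makes the conditioning rigorous, and your factor $2$ and the $g \equiv 1$ consistency check are both correct. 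The one load-bearing step you assert rather than prove is the fission-time many-to-one identity
\begin{equation*}
E^{x_0}\Big[\sum_{b \, : \, \tau_b \leq t} g(\tau_b)\Big] \ = \ \tilde{E}^{x_0}\Big[\int_0^t g(\tau)\, \mathrm{d}\big(\mathrm{e}^{\beta \tilde{L}_\tau}\big)\Big].
\end{equation*}
This identity is true, and it is precisely where your route and the two-spine route meet: on your side $\beta \mathrm{e}^{\beta \tilde{L}_\tau}\mathrm{d}\tilde{L}_\tau$ is the many-to-one projection of the fission compensator $\beta\sum_{u \in N_\tau}\mathrm{d}L^u_\tau$, while in \cite{BH16} and \cite{HR} the same measure appears as the (size-biased) law of the splitting time of the two spines. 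To close it yourself you would approximate the local time by occupation times of $[-\varepsilon,\varepsilon]$, apply \eqref{eq_many_to_one} at each fixed time, and interchange limits by Fubini and monotone convergence (all terms are nonnegative for $f \geq 0$, and general $f$ follows from the assumed integrability); alternatively one can invoke the measure-valued branching-rate framework of \cite{S08}. So the approach is correct and genuinely self-contained relative to the paper's citation, but as written it delegates its only substantive analytic step to a stated-but-unproved identity.
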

Alternative derivation of \eqref{eq_many_to_two} without the spine construction is available in \cite{S08} (Lemma 3.3). Note that in our model it doesn't matter whether to write $S^0_f(\cdot)$ or $S^{\xi_\tau^{(1)}}_f(\cdot)$ in the integrand since the integrator is only growing on the zero set of $\xi^{(1)}$.
\begin{Proposition}
\label{square_estimate}
For any $x_0 \in \mathbb{R}$, $\lambda \in (0, \beta)$, $A$, $B \subseteq \mathbb{R}$, $t \geq 0$ such that 
$\inf A + \lambda t$, $\inf B + \lambda t \geq 0$ and $s \in [0, t]$ we have that
\begin{equation}
\label{square_estimate_eq}
E^{x_0} \Big\vert N_{t-s}^{(A + \lambda t) \cup (-B - \lambda t)} \Big\vert^2 \leq E^{x_0} \Big\vert N_{t-s}^{(A + \lambda t) \cup (-B - \lambda t)} \Big\vert 
+ C \mathrm{e}^{- \beta |x_0| - \beta^2 s + 2 \Delta_\lambda t},
\end{equation}.
where $C$ is some positive constant (which depends on $A$ and $B$ only).
\end{Proposition}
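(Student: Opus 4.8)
The plan is to apply the Many-to-Two Lemma (Lemma \ref{many_to_two}) with $f = \mathbf{1}_{D_t}$, where $D_t := (A + \lambda t) \cup (-B - \lambda t)$, and running time $t - s$. Since $f$ is an indicator we have $f^2 = f$, so the first term on the right-hand side of \eqref{eq_many_to_two} is exactly $S^{x_0}_{f^2}(t-s) = E^{x_0}\big\vert N_{t-s}^{D_t}\big\vert$, which already matches the first term in \eqref{square_estimate_eq}. Everything then reduces to bounding the correction term
\[
R := 2\,\tilde{E}^{x_0}_2\Big(\int_0^{t-s}\big[S^0_{\mathbf{1}_{D_t}}(t-s-\tau)\big]^2\,\mathrm{d}\big(\mathrm{e}^{\beta\tilde{L}^{(1)}_\tau}\big)\Big)
\]
by $C\,\mathrm{e}^{-\beta|x_0| - \beta^2 s + 2\Delta_\lambda t}$.

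Next I would control the deterministic factor $S^0_{\mathbf{1}_{D_t}}(r) = E^0\big\vert N_r^{D_t}\big\vert$ uniformly in $r \in [0, t-s]$. Writing $D_t = (A+\lambda t)\cup\big(-(B+\lambda t)\big)$ and noting that $\inf(A+\lambda t) = \inf A + \lambda t \geq 0$ and $\inf(B+\lambda t) = \inf B + \lambda t \geq 0$ by hypothesis, Corollary \ref{exp_estimate} applies directly and gives
\[
S^0_{\mathbf{1}_{D_t}}(r) \leq \big(\mathrm{e}^{-\beta(\inf A + \lambda t)} + \mathrm{e}^{-\beta(\inf B + \lambda t)}\big)\mathrm{e}^{\frac{\beta^2}{2}r} = C_0\,\mathrm{e}^{\frac{\beta^2}{2}r - \beta\lambda t},
\]
with $C_0 := \mathrm{e}^{-\beta\inf A} + \mathrm{e}^{-\beta\inf B}$ depending only on $A$ and $B$. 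This is the crucial input: Corollary \ref{exp_estimate} has already absorbed both the Gaussian tail and the local-time contribution into a single clean exponential, so no fresh first-moment computation is needed. Squaring and substituting $r = t-s-\tau$, the exponent identity $\beta^2(t-s-\tau) - 2\beta\lambda t = 2\Delta_\lambda t - \beta^2 s - \beta^2\tau$ yields
\[
\big[S^0_{\mathbf{1}_{D_t}}(t-s-\tau)\big]^2 \leq C_0^2\,\mathrm{e}^{2\Delta_\lambda t - \beta^2 s - \beta^2\tau}.
\]

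It remains to handle the local-time integral. Since the integrand is now deterministic and the integrator $\mathrm{d}(\mathrm{e}^{\beta\tilde{L}^{(1)}_\tau})$ depends only on the first spine, whose law under $\tilde{P}_2$ coincides with that of a single Brownian spine under $\tilde{P}$, I would collapse $\tilde{E}^{x_0}_2$ to $\tilde{E}^{x_0}$ and bound
\[
R \leq 2C_0^2\,\mathrm{e}^{2\Delta_\lambda t - \beta^2 s}\,\tilde{E}^{x_0}\Big(\int_0^{t-s}\mathrm{e}^{-\beta^2\tau}\,\mathrm{d}\big(\mathrm{e}^{\beta\tilde{L}_\tau}\big)\Big).
\]
By Tonelli (all quantities being non-negative) the expectation of the Lebesgue--Stieltjes integral equals $\int_0^{t-s}\mathrm{e}^{-\beta^2\tau}\,\mathrm{d}\big(E^{x_0}|N_\tau|\big)$, using the Many-to-One Lemma (Lemma \ref{many_to_one}) with $f \equiv 1$ to identify $\tilde{E}^{x_0}[\mathrm{e}^{\beta\tilde{L}_\tau}] = E^{x_0}|N_\tau| =: m(\tau)$. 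Since $m(0) = 1$ and $m$ is non-decreasing, I would integrate by parts against $m(\tau)-1$ and apply the bound $m(\tau) - 1 \leq 2\,\mathrm{e}^{-\beta|x_0| + \frac{\beta^2}{2}\tau}$ from Corollary \ref{exp_pop}; the constant part of $m$ contributes nothing, and the surviving terms are integrable precisely because of the net $\mathrm{e}^{-\frac{\beta^2}{2}\tau}$ decay, giving a bound of the form $\mathrm{const}\cdot\mathrm{e}^{-\beta|x_0|}$. Combining the three displays produces $R \leq C\,\mathrm{e}^{-\beta|x_0| - \beta^2 s + 2\Delta_\lambda t}$ with $C$ depending only on $A$ and $B$, as required.

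The main obstacle, and the only genuinely delicate part, is this final step: one must (i) justify collapsing the two-spine expectation to a single Brownian spine, (ii) interchange expectation with the local-time integral, and above all (iii) observe that the constant ``$1$'' in $m(\tau) = E^{x_0}|N_\tau|$ drops out upon integration, so that what survives carries the exact $\mathrm{e}^{-\beta|x_0|}$ prefactor together with the $\mathrm{e}^{-\frac{\beta^2}{2}\tau}$ decay needed for a bound uniform in $t$ and $s$. The preceding steps are essentially bookkeeping once the clean first-moment estimate of Corollary \ref{exp_estimate} is invoked.
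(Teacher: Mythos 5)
Your proof is correct and follows essentially the same route as the paper's: the Many-to-Two Lemma applied to the indicator of $(A+\lambda t)\cup(-B-\lambda t)$ at time $t-s$, Corollary \ref{exp_estimate} to bound the squared first-moment factor by $\big(\mathrm{e}^{-\beta\inf A}+\mathrm{e}^{-\beta\inf B}\big)^2\mathrm{e}^{2\Delta_\lambda t-\beta^2 s-\beta^2\tau}$, and Corollary \ref{exp_pop} together with integration by parts to bound the remaining local-time integral by a constant multiple of $\mathrm{e}^{-\beta|x_0|}$. If anything, your handling of the last step is cleaner than the paper's: by integrating by parts against $m(\tau)-1$ (which vanishes at $\tau=0$) before inserting the bound of Corollary \ref{exp_pop}, you substitute the upper bound only into positively-signed terms, whereas the paper as written also substitutes it into the negatively-signed boundary term at $\tau=0$; the only price is a somewhat larger absolute constant $C$, which is immaterial since $C$ is unspecified.
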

\begin{proof}
Taking $t$ to be $t-s$ and $f(\cdot) = \mathbf{1}_{\{(A +\lambda t) \cup (-B - \lambda t)\}}(\cdot)$ in Lemma \ref{many_to_two} we get
\begin{align*}
E^{x_0} \Big\vert N_{t-s}^{(A + \lambda t)\cup(-B - \lambda t)} \Big\vert^2 = &E^{x_0} \Big\vert N_{t-s}^{(A + \lambda t)\cup(-B -\lambda t)} \Big\vert\\ 
+ &2 \tilde{E}^{x_0}_2 \Big( \int_0^t \Big[ E \big\vert N_{t-s-\tau}^{(A + \lambda t)\cup(-B - \lambda t)}\big\vert
\Big]^2 \mathrm{d} \big( \mathrm{e}^{\beta \tilde{L}_\tau^{(1)}} \big) \Big).
\end{align*}
Then from Corollary \ref{exp_estimate} we get 
\begin{align*}
&\tilde{E}^{x_0}_2 \Big( \int_0^t \Big[ E \big\vert N_{t-s-\tau}^{(A + \lambda t)\cup(-B - \lambda t)}\big\vert
\Big]^2 \mathrm{d} \big( \mathrm{e}^{\beta \tilde{L}_\tau^{(1)}} \big) \Big)\\ 
\leq &\tilde{E}^{x_0}_2 \Big( \int_0^t \Big[ \big( \mathrm{e}^{- \beta(\inf A + \lambda t)} + \mathrm{e}^{- \beta(\inf B + \lambda t)}\big) \mathrm{e}^{\frac{\beta^2}{2}(t-s-\tau)} \Big]^2 \mathrm{d} \big( \mathrm{e}^{\beta \tilde{L}_\tau^{(1)}} \big) \Big)\\
= &\big( \mathrm{e}^{-\beta \inf A} + \mathrm{e}^{-\beta \inf B}\big)^2
\mathrm{e}^{2 \Delta_\lambda t - \beta^2 s} \tilde{E}^{x_0} \Big( \int_0^t \mathrm{e}^{- \beta^2 \tau} \mathrm{d} \big( \mathrm{e}^{\beta \tilde{L}_\tau^{(1)}} \big)\Big).
\end{align*}
Then applying the integration-by-parts formula we get that 
\begin{align*}
\tilde{E}^{x_0} \Big( \int_0^t \mathrm{e}^{- \beta^2 \tau} \mathrm{d} \big( \mathrm{e}^{\beta \tilde{L}_\tau^{(1)}} \big)\Big) = 
&\tilde{E}^{x_0} \Big( \Big[ \mathrm{e}^{- \beta^2 \tau} \mathrm{e}^{\beta \tilde{L}_\tau^{(1)}} \Big]_0^t
+ \int_0^t \beta^2 \mathrm{e}^{- \beta^2 \tau} \mathrm{e}^{\beta \tilde{L}_\tau^{(1)}} \mathrm{d} \tau\Big)\\
= &\Big[ \mathrm{e}^{- \beta^2 \tau} \tilde{E}^{x_0} \big( \mathrm{e}^{\beta \tilde{L}_\tau^{(1)}} \big) \Big]_0^t
+ \int_0^t \beta^2 \mathrm{e}^{- \beta^2 \tau} \tilde{E}^{x_0} \big(\mathrm{e}^{\beta \tilde{L}_\tau^{(1)}} \big)\mathrm{d} \tau.
\end{align*}
From Corollary \ref{exp_pop} we also get that
\[
\tilde{E}^{x_0} \big( \mathrm{e}^{\beta \tilde{L}_\tau^{(1)}} \big) = E^{x_0} |N_\tau| \leq 
1 + 2 \mathrm{e}^{- \beta |x_0| + \frac{\beta^2}{2}\tau}.
\]
Hence
\begin{align*}
&\Big[ \mathrm{e}^{- \beta^2 \tau} \tilde{E}^{x_0} \big( \mathrm{e}^{\beta \tilde{L}_\tau^{(1)}} \big) \Big]_0^t
+ \int_0^t \beta^2 \mathrm{e}^{- \beta^2 \tau} \tilde{E}^{x_0} \big(\mathrm{e}^{\beta \tilde{L}_\tau^{(1)}} \big)\mathrm{d} \tau\\
\leq &\Big[ \mathrm{e}^{- \beta^2 \tau}\big( 1 + 2 \mathrm{e}^{- \beta |x_0| + \frac{\beta^2}{2}\tau} \big) \Big]_0^t
+ \int_0^t \beta^2 \mathrm{e}^{- \beta^2 \tau} \big(1 + 2 \mathrm{e}^{- \beta |x_0| + \frac{\beta^2}{2}\tau}\big)\mathrm{d} \tau\\
= &\int_0^t \mathrm{e}^{- \beta^2 \tau} \frac{\partial}{\partial \tau} \Big( 1 + 2 \mathrm{e}^{- \beta |x_0| + \frac{\beta^2}{2}\tau}\Big) \mathrm{d}\tau\\
= &2 \mathrm{e}^{- \beta |x_0|} \int_0^t \frac{\beta^2}{2} \mathrm{e}^{- \frac{\beta^2}{2} \tau} \mathrm{d} \tau
\leq 2 \mathrm{e}^{- \beta |x_0|},
\end{align*}
which establishes \eqref{square_estimate_eq} with $C = 2 \big( \mathrm{e}^{-\beta \inf A} + \mathrm{e}^{-\beta \inf B}\big)^2$. 
\end{proof}
\subsection{Probability estimates}
\begin{Proposition}
\label{prob_estimates}
Take any real numbers $\lambda \in [\frac{\beta}{2}, \beta)$ and $K > 0$, sets $A$, $B \subset \mathbb{R}$ satisfying $\inf A$, $\inf B > - \infty$ and a function $s : [0, \infty) \to [0, \infty)$ such that $s(t) = o(t)$ as $t \to \infty$, $t - s(t) \geq 0$ for all $t \geq 0$ and if $\lambda = \frac{\beta}{2}$ then $s(t) \to \infty$ as $t \to \infty$. 

Then for any choice of the above quantities there exist functions $\theta_{5}(\cdot)$, $\theta_{6}(\cdot)$, $\theta_{7}(\cdot)$, $\theta_8(\cdot) : [0, \infty) \to [0, \infty)$ satisfying $\theta_5(t)$, $\theta_6(t)$, $\theta_7(t)$, $\theta_8(t) \to 1$ as $t \to \infty$ such that for any $t \geq 0$ and $x_0 \in \mathbb{R}$ with $|x_0| < K s(t)$ it is true that
\begin{align}
&P^{x_0} \Big( \big\vert N_{t-s}^{(A + \lambda t) \cup (-B -\lambda t)}\big\vert = 0 \Big) \geq 
1 - \big( \mu(A) + \mu(B) \big) \mathrm{e}^{- \beta |x_0| - \frac{\beta^2}{2} s + \Delta_\lambda t} \theta_5(t),\label{estimate2}\\
&P^{x_0} \Big( \big\vert N_{t-s}^{(A + \lambda t) \cup (-B -\lambda t)}\big\vert = 0 \Big) \leq 
1 - \big( \mu(A) + \mu(B) \big) \mathrm{e}^{- \beta |x_0| - \frac{\beta^2}{2} s + \Delta_\lambda t} \theta_6(t),\label{estimate2a}\\
&P^{x_0} \Big( \big\vert N_{t-s}^{(A + \lambda t) \cup (-B -\lambda t)}\big\vert = 1 \Big) \geq \big( \mu(A) + \mu(B) \big) \mathrm{e}^{- \beta |x_0| - \frac{\beta^2}{2} s + \Delta_\lambda t} \theta_7(t),\label{estimate3}\\
&P^{x_0} \Big( \big\vert N_{t-s}^{(A + \lambda t) \cup (-B -\lambda t)}\big\vert = 1 \Big) \leq \big( \mu(A) + \mu(B) \big) \mathrm{e}^{- \beta |x_0| - \frac{\beta^2}{2} s + \Delta_\lambda t} \theta_8(t),\label{estimate3a}\\
&P^{x_0} \Big( \big\vert N_{t-s}^{(A + \lambda t) \cup (-B -\lambda t)}\big\vert > 1 \Big) \leq C \mathrm{e}^{- \beta |x_0| - \beta^2 s + 2 \Delta_\lambda t},
\label{estimate4}
\end{align}
where $s  = s(t)$ and $C$ is a positive constant (the same one as in Proposition \eqref{square_estimate}).
\end{Proposition}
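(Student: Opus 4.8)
The plan is to translate the first- and second-moment estimates already established in Corollary \ref{estimate_exp} and Proposition \ref{square_estimate} into the five probability bounds by means of elementary inequalities for non-negative integer-valued random variables. Throughout write $X := \big\vert N_{t-s}^{(A + \lambda t) \cup (-B -\lambda t)}\big\vert$ and set $m_1 := E^{x_0}[X]$, $m_2 := E^{x_0}[X^2]$. By \eqref{estimate1} the mean $m_1$ is squeezed between $\big( \mu(A) + \mu(B) \big) \mathrm{e}^{- \beta |x_0| - \frac{\beta^2}{2} s + \Delta_\lambda t}\theta_1(t)$ and the same expression with $\theta_2(t)$, while \eqref{square_estimate_eq} gives $m_2 - m_1 = E^{x_0}[X(X-1)] \leq C \mathrm{e}^{- \beta |x_0| - \beta^2 s + 2 \Delta_\lambda t}$ (the positivity and disjointness conditions needed to invoke these results hold for all large $t$ since $\inf A,\inf B > -\infty$ and $\lambda>0$). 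These are the only inputs I will use.

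First I would record the combinatorial inequalities. Since $k(k-1) \geq 2$ for every integer $k \geq 2$, one has $E^{x_0}[X(X-1)] \geq 2\,P^{x_0}(X \geq 2)$, so that $P^{x_0}(X > 1) \leq \tfrac12 (m_2 - m_1)$; substituting the second-moment bound and absorbing the factor $\tfrac12$ into $C$ yields \eqref{estimate4}. For the event $\{X=1\}$ the upper bound $P^{x_0}(X=1) \leq m_1$ is immediate and, together with the upper estimate on $m_1$, gives \eqref{estimate3a} with $\theta_8 = \theta_2$. For the matching lower bound I would use $k \leq k(k-1)$ for $k \geq 2$ to write $P^{x_0}(X=1) = m_1 - \sum_{k \geq 2} k\,P^{x_0}(X=k) \geq m_1 - E^{x_0}[X(X-1)] = 2m_1 - m_2$, which after inserting the lower bound on $m_1$ and the bound on $m_2 - m_1$ produces \eqref{estimate3}. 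Finally, Markov's inequality gives $P^{x_0}(X \geq 1) \leq m_1$, hence $P^{x_0}(X=0) \geq 1 - m_1$, which with the upper bound on $m_1$ is \eqref{estimate2}; and $P^{x_0}(X=0) \leq 1 - P^{x_0}(X=1)$ combined with \eqref{estimate3} gives \eqref{estimate2a} with $\theta_6 = \theta_7$.

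The one genuinely delicate point is showing that the second-moment contribution is negligible compared with the main term, uniformly in $x_0$ over the range $|x_0| < K s(t)$. The ratio of the additive correction $C \mathrm{e}^{- \beta |x_0| - \beta^2 s + 2 \Delta_\lambda t}$ to the leading term $\big( \mu(A) + \mu(B) \big) \mathrm{e}^{- \beta |x_0| - \frac{\beta^2}{2} s + \Delta_\lambda t}$ equals a constant multiple of $\mathrm{e}^{\Delta_\lambda t - \frac{\beta^2}{2} s}$, in which the factors $\mathrm{e}^{-\beta|x_0|}$ cancel; this is exactly why the resulting $\theta_5,\dots,\theta_8$ can be taken independent of $x_0$. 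It then remains to check $\mathrm{e}^{\Delta_\lambda t - \frac{\beta^2}{2} s} \to 0$. For $\lambda \in (\tfrac{\beta}{2}, \beta)$ this is automatic, since $\Delta_\lambda < 0$ forces $\mathrm{e}^{\Delta_\lambda t} \to 0$ while $\mathrm{e}^{-\frac{\beta^2}{2}s} \leq 1$; in the boundary case $\lambda = \tfrac{\beta}{2}$ we have $\Delta_\lambda = 0$, and there the vanishing is supplied precisely by the extra hypothesis $s(t) \to \infty$. Absorbing this vanishing correction into each $\theta_i$ gives $\theta_i(t) \to 1$ and completes the argument. I expect the handling of the boundary case $\lambda = \tfrac{\beta}{2}$ to be the main obstacle, as it is the only regime where the relative size of the error is controlled by the growth of $s$ rather than by the sign of $\Delta_\lambda$.
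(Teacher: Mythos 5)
Your argument is correct and uses the same two inputs as the paper (Corollary \ref{estimate_exp} and Proposition \ref{square_estimate}), but the middle of your proof takes a genuinely different, more elementary route. Writing, as you do, $X = \big\vert N_{t-s}^{(A + \lambda t) \cup (-B -\lambda t)}\big\vert$, $m_1 = \mathbb{E}X$ and $m_2 = \mathbb{E}X^2$: the paper obtains \eqref{estimate4} from $\mathbb{P}(X>1)\leq m_2-m_1$ and \eqref{estimate2} from Markov's inequality, just as you do; however, for \eqref{estimate2a} it invokes the Paley--Zygmund inequality $\mathbb{P}(X>0)\geq m_1^2/m_2$, and only afterwards derives \eqref{estimate3} from the identity $\mathbb{P}(X=1)=1-\mathbb{P}(X=0)-\mathbb{P}(X>1)$. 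You proceed in the reverse order and dispense with Paley--Zygmund altogether: the pointwise inequality $k\leq k(k-1)$ for integers $k\geq 2$ gives $\mathbb{P}(X=1)\geq m_1-\mathbb{E}[X(X-1)]=2m_1-m_2$, which is \eqref{estimate3} directly, and then \eqref{estimate2a} follows from $\mathbb{P}(X=0)\leq 1-\mathbb{P}(X=1)$. What your route buys is purely additive bookkeeping: your $\theta_7$ (and hence your $\theta_6=\theta_7$) is $\theta_1(t)$ minus a constant multiple of $\mathrm{e}^{-\frac{\beta^2}{2}s+\Delta_\lambda t}$, whereas the paper's $\theta_6$ is the ratio $\theta_1(t)^2\big(\theta_2(t)+\frac{C}{\mu(A)+\mu(B)}\mathrm{e}^{-\frac{\beta^2}{2}s+\Delta_\lambda t}\big)^{-1}$; the two are asymptotically equivalent, and your analysis of why the correction vanishes (the sign of $\Delta_\lambda$ when $\lambda>\frac{\beta}{2}$, the growth of $s(t)$ when $\lambda=\frac{\beta}{2}$), together with the cancellation of $\mathrm{e}^{-\beta|x_0|}$ that gives uniformity over $|x_0|<Ks(t)$, matches the paper's reasoning exactly. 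One small point you should add, which the paper does state explicitly: when $\mu(A)+\mu(B)=0$ your $\theta_7$ involves division by $\mu(A)+\mu(B)$, so that degenerate case must be disposed of separately (all five bounds are then trivial, e.g.\ with $\theta_7\equiv 1$, since $X=0$ almost surely).
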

Note that from Proposition \ref{prob_estimates} we immediately prove Theorem \ref{main2} by taking $s(\cdot) \equiv 0$ in \eqref{estimate2} and \eqref{estimate2a}.
\begin{proof}
Inequality \eqref{estimate4} follows from \eqref{square_estimate_eq} and the trivial fact that if $X$ is a random variable supported on $\{0, 1, 2, \cdots \}$ then $\mathbb{P}(X>1) \leq \mathbb{E}X^2 - \mathbb{E}X$.

From Markov's inequality and \eqref{estimate1} we have
\begin{align}
\label{estimate5}
P^{x_0} \Big( \big\vert N_{t-s}^{(A + \lambda t) \cup (-B -\lambda t)}\big\vert > 0 \Big) \leq &E^{x_0} \big\vert N_{t-s}^{(A + \lambda t) \cup (-B -\lambda t)} \big\vert \nonumber\\ 
\leq &\big( \mu(A) + \mu(B)\big) \mathrm{e}^{- \beta |x_0| - \frac{\beta^2}{2} s + \Delta_\lambda t} \theta_2(t).
\end{align}
From Paley-Zygmund's inequality, \eqref{estimate1} and \eqref{square_estimate_eq} we have (as long as $\mu(A)+ \mu(B)>0$) that
\begin{align}
\label{estimate6}
P^{x_0} \Big( \big\vert N_{t-s}^{(A + \lambda t) \cup (-B -\lambda t)}\big\vert > 0 \Big) \geq &\frac{\Big[ E^{x_0} \big\vert N_{t-s}^{(A + \lambda t) \cup (-B -\lambda t)} \big\vert\Big]^2}{E^{x_0} \big\vert N_{t-s}^{(A + \lambda t) \cup (-B -\lambda t)} \big\vert^2}\nonumber\\ 
\geq &\frac{\Big[ \big( \mu(A) + \mu(B)\big) \mathrm{e}^{- \beta |x_0| - \frac{\beta^2}{2} s + \Delta_\lambda t} \theta_1(t)\Big]^2}{\big( \mu(A) + \mu(B)\big) \mathrm{e}^{- \beta |x_0| - \frac{\beta^2}{2} s + \Delta_\lambda t} \theta_2(t) + C \mathrm{e}^{- \beta |x_0| - \beta^2 s + 2 \Delta_\lambda t}}\nonumber\\
= &\big( \mu(A) + \mu(B)\big) \mathrm{e}^{- \beta |x_0| - \frac{\beta^2}{2} s + \Delta_\lambda t} \Big[
\frac{\theta_1(t)^2}{\theta_2(t) + \frac{C}{\mu(A) + \mu(B)}\mathrm{e}^{-\frac{\beta^2}{2}s + \Delta_\lambda t}}\Big].
\end{align}
Substituting inequalities \eqref{estimate5} and \eqref{estimate6} in
\[
P^{x_0} \Big( \big\vert N_{t-s}^{(A + \lambda t) \cup (-B -\lambda t)}\big\vert = 0 \Big) =  1 - P^{x_0} \Big( \big\vert N_{t-s}^{(A + \lambda t) \cup (-B -\lambda t)}\big\vert > 0 \Big)
\]
we establish \eqref{estimate2} with $\theta_5(t) = \theta_2(t)$ and $\theta_6(t) = \theta_1(t)^2\big(\theta_2(t) + \frac{C}{\mu(A)+\mu(B)}\mathrm{e}^{-\frac{\beta^2}{2}s + \Delta_\lambda t}\big)^{-1}$ (and in the case $\mu(A) + \mu(B) =0$ we can just take $\theta_6 \equiv 1$).

Finally, substituting \eqref{estimate2} and \eqref{estimate4} in
\begin{align*}
P^{x_0} \Big( \big\vert N_{t-s}^{(A + \lambda t) \cup (-B -\lambda t)}\big\vert = 1 \Big) = &P^{x_0} \Big( \big\vert N_{t-s}^{(A + \lambda t) \cup (-B -\lambda t)}\big\vert > 0 \Big)- P^{x_0} \Big( \big\vert N_{t-s}^{(A + \lambda t) \cup (-B -\lambda t)}\big\vert > 1 \Big)\\
= &1 - P^{x_0} \Big( \big\vert N_{t-s}^{(A + \lambda t) \cup (-B -\lambda t)}\big\vert = 0 \Big) - P^{x_0} \Big( \big\vert N_{t-s}^{(A + \lambda t) \cup (-B -\lambda t)}\big\vert > 1 \Big)
\end{align*}
we establish \eqref{estimate3} with $\theta_8(t) = \theta_5(t)$ and $\theta_7(t) = \theta_6(t) - \frac{C}{\mu(A)+\mu(B)} \mathrm{e}^{- \frac{\beta^2}{2}s + \Delta_\lambda t}$ if $\mu(A) + \mu(B)>0$ (and in the case $\mu(A) + \mu(B) = 0$ we can just take $\theta_7 \equiv 1$).
\end{proof}

\section{Proof of the main results}
\subsection{Proof of Theorem \ref{main}}
Take any $x_0 \in \mathbb{R}$, integers $n, m \geq 0$, $k_1, \cdots, k_n$, $l_1, \cdots, l_m \geq 0$ and Borel sets $A_1, \cdots, A_n$, $B_1, \cdots B_m$ such that $A_1, \cdots, A_n$ are mutually-disjoint, $B_1, \cdots, B_m$ are mutually-disjoint  and $\inf A_1, \cdots, \inf A_n$, $\inf B_1, \cdots, \inf B_m > - \infty$. 

For our convenience let us define 
\begin{align*}
k:= &k_1 + \cdots +k_n + l_1 + \cdots + l_m,\\
A:= &\bigcup_{i=1}^n A_i, \ B:= \bigcup_{j=1}^m B_j.
\end{align*}
Let us fix a function $s(\cdot)$ such that $s(t) \to \infty$ but $s(t) = o(t)$ as $t \to \infty$ and $t - s(t) \geq 0$ for all $t \geq 0$ (e.g. $s(t) = \min \{ \sqrt{t}, t\}$). We shall write $s$ instead of $s(t)$ to lighten the notation.

Our aim is to prove  that 
\begin{align}
\label{eq_main_proof}
&P^{x_0} \Big( \bigcap_{i=1}^n \big\{ \big\vert N_t^{A_i + \frac{\beta}{2} t} \big\vert = k_i \big\} \text{ , }
\bigcap_{j=1}^{m} \big\{ \big\vert N_t^{- B_j - \frac{\beta}{2} t} \big\vert = l_j \big\} \ \Big\vert \ \mathcal{F}_s \Big) \nonumber \\
\to &\prod_{i = 1}^{n} \Big( \frac{\big( \mu(A_i) M_\infty \big)^{k_i}}{k_i !} 
\mathrm{e}^{- \mu(A_i) M_\infty} \Big) \prod_{j = 1}^{m} \Big( \frac{\big( \mu(B_j) M_\infty \big)^{l_j}}{l_j !} 
\mathrm{e}^{- \mu(B_j) M_\infty} \Big) \qquad P^{x_0} \text{-a.s.}
\end{align}
as $t \to \infty$.
\begin{proof}
For every particle $u \in N_s$ and a set $D \subseteq \mathbb{R}$ we define 
\[
N_{t-s}^D(u) := \{v \in N_t : X^v_t \in D \text{ and } u < v\},
\]
the set of descendants of $u$ at time $t$ whose spatial position at time $t$ belongs to the set $D$. 

Let us fix any number $K > \frac{\beta}{2}$ and define the following two events:
\begin{align*}
S_t^1 := &\bigcap_{u \in N_s} \Big\{ |X^u_s| < Ks \Big\} \bigcap \Big\{ |N_s| \geq k \Big\},\\
S_t^2 := &\bigcap_{u \in N_s} \Big\{ \big\vert N_{t-s}^{(A + \frac{\beta}{2} t)\cup(-B - \frac{\beta}{2} t)}(u) \big\vert \leq 1\Big\}.
\end{align*}
Then we already know from \eqref{old_supercrit} that $P^{x_0}\big(S_t^1\big) \to 1$ as $t \to \infty$ (eventually, all the particles are contained in $(-Ks, Ks)$ at time $s$ and the total number of particles at time $s$ increases to $\infty$). Thus 
\begin{equation}
\label{s1}
\mathbf{1}_{S_t^1} \to 1 \qquad P^{x_0}-\text{a.s.}
\end{equation}
as $t \to \infty$. Also from estimate \eqref{estimate4} we have that
\begin{align*}
P^{x_0} \Big( \big( S_t^2 \big)^c \Big\vert \mathcal{F}_s \Big) 
= &P^{x_0} \Big( \bigcup _{u \in N_s} \Big\{ \big\vert N^{(A + \frac{\beta}{2} t)\cup(-B - \frac{\beta}{2} t)}_{t-s}(u) \big\vert > 1 \Big\} \Big\vert \mathcal{F}_s\Big)\\
\leq &\sum_{u \in N_s} P^{X^u_s} \Big( \big\vert N^{(A + \frac{\beta}{2} t)\cup(-B - \frac{\beta}{2} t)}_{t-s} \big\vert > 1 \Big)\\
\leq &C\sum_{u \in N_s} \mathrm{e}^{- \beta |X^u_s| - \beta^2 s}\\
= &C \mathrm{e}^{- \frac{\beta^2}{2}s} M_s\\
\to &0 \qquad P^{x_0} \text{-a.s. as } t \to \infty 
\end{align*}
and hence
\begin{align}
\label{s2}
P^{x_0} \Big( S_t^2 \Big\vert \mathcal{F}_s \Big) \to 1 \qquad P^{x_0} \text{-a.s.}
\end{align}
From \eqref{s1} and \eqref{s2} we have that 
\begin{align*}
&P^{x_0} \Big( \bigcap_{i=1}^n \big\{ \big\vert N_t^{A_i + \frac{\beta}{2} t} \big\vert = k_i \big\} \text{ ,}
\bigcap_{j=1}^{m} \big\{ \big\vert N_t^{- B_j - \frac{\beta}{2} t} \big\vert = l_j \big\} \ \Big\vert \mathcal{F}_s \Big)\\
= &P^{x_0} \Big( \bigcap_{i=1}^n \big\{ \big\vert N_t^{A_i + \frac{\beta}{2} t} \big\vert = k_i \big\} \text{ ,}
\bigcap_{j=1}^{m} \big\{ \big\vert N_t^{- B_j - \frac{\beta}{2} t} \big\vert = l_j \big\} , \ S_t^1, \ S_t^2 \ \Big\vert \mathcal{F}_s \Big) + \epsilon_t
\end{align*}
for some $\epsilon_t$ such that $\epsilon_t \to 0$ $P^{x_0}$-a.s. We then note that on the event $S_t^2$ random variables $\big\vert N^{(A + \frac{\beta}{2} t)\cup(-B - \frac{\beta}{2} t)}_{t-s}(u) \big\vert $, $u \in N_s$ are Bernoulli random variables so that counting in how many ways $k$ particles from $N_s$ can be assigned to $(A + \frac{\beta}{2} t)\cup(-B - \frac{\beta}{2} t)$ (and the remaining particles from $N_s$ assigned to $\big((A + \frac{\beta}{2} t)\cup(-B - \frac{\beta}{2} t)\big)^c$) in such a way that of these $k$ particles $k_1$ are assigned to $A_1 + \frac{\beta}{2}t$, $k_2$ are assigned to $A_2 + \frac{\beta}{2}t$, $\cdots$, $k_n$ are assigned to $A_n + \frac{\beta}{2}t$, $l_1$ are assigned to $-B_1 - \frac{\beta}{2}t$, $\cdots$, $l_m$ are assigned to $-B_m - \frac{\beta}{2}t$ gives us
\begin{align*}
&P^{x_0} \Big( \bigcap_{i=1}^n \big\{ \big\vert N_t^{A_i + \frac{\beta}{2} t} \big\vert = k_i \big\} \text{ ,}
\bigcap_{j=1}^{m} \big\{ \big\vert N_t^{- B_j - \frac{\beta}{2} t} \big\vert = l_j \big\} , \ S_t^1, \ S_t^2 \ \Big\vert \mathcal{F}_s \Big)\\
= &\frac{1}{k_1! \cdots k_n! l_1! \cdots l_m!} P^{x_0} \Big( \bigcup_{(u_1, \cdots, u_k) \subseteq N_s} \Big\{ \big\vert N_{t-s}^{A_1 + \frac{\beta}{2} t}(u_1)\big\vert = 1, \cdots, \big\vert N_{t-s}^{A_1 + \frac{\beta}{2} t}(u_{k_1})
\big\vert = 1,\\ 
&\quad\qquad\qquad\qquad\qquad\qquad\qquad\qquad\qquad\qquad\qquad\qquad\qquad\cdots, \big\vert N_{t-s}^{- B_m - \frac{\beta}{2} t}(u_k)\big\vert = 1,\\
&\qquad\qquad\qquad\qquad\qquad\qquad\qquad  \big\vert 
N_{t-s}^{(A+\frac{\beta}{2} t) \cup (-B -\frac{\beta}{2} t)}(u)\big\vert = 0, 
u \neq u_1, \cdots, u_k\Big\} , \ S_t^1, \ S_t^2 \ \Big\vert \mathcal{F}_s \Big),
\end{align*}
where $\bigcup_{(u_1, \cdots, u_k) \subseteq N_s}$ is the union  over all $k$-permutations of $N_s$. Equivalently, $(u_1, \cdots, u_k) \subseteq N_s$ may be written as $u_1 \in N_s, \ u_2 \in N_s: u_2 \neq u_1, \ u_3 \in N_s: u_3 \neq u_1, u_2, \ \cdots, \ u_k \in N_s: u_k \neq u_1, \cdots, u_{k-1}$.

Then noting that $\bigcup_{(u_1, \cdots, u_k) \subseteq N_s} \{ \cdot \}$ is a union of mutually-disjoint events and that $|N_{t-s}^{(\cdot)}(u)|$, $u \in N_s$ are independent conditional on $\mathcal{F}_s$ we have that
\begin{align*}
&P^{x_0} \Big( \bigcup_{(u_1, \cdots, u_k) \subseteq N_s} \Big\{ \big\vert N_{t-s}^{A_1 + \frac{\beta}{2} t}(u_1)\big\vert = 1, \cdots, \big\vert N_{t-s}^{- B_m - \frac{\beta}{2} t}(u_k)
\big\vert = 1,\\ 
& \qquad\qquad\qquad\qquad\qquad  \big\vert 
N_{t-s}^{(A+\frac{\beta}{2} t) \cup (-B -\frac{\beta}{2} t)}(u)\big\vert = 0, 
u \neq u_1, \cdots, u_k\Big\} , \ S_t^1, \ S_t^2 \ \Big\vert \mathcal{F}_s \Big)\\
= &P^{x_0} \Big( \bigcup_{(u_1, \cdots, u_k) \subseteq N_s} \Big\{ \big\vert N_{t-s}^{A_1 + \frac{\beta}{2} t}(u_1)\big\vert = 1, \cdots, \big\vert N_{t-s}^{- B_m - \frac{\beta}{2} t}(u_k)
\big\vert = 1,\\ 
& \qquad\qquad\qquad\qquad\qquad  \big\vert 
N_{t-s}^{(A+\frac{\beta}{2} t) \cup (-B -\frac{\beta}{2} t)}(u)\big\vert = 0, 
u \neq u_1, \cdots, u_k\Big\} , \ S_t^1 \ \Big\vert \mathcal{F}_s \Big) + \epsilon_t'\\
= &\mathbf{1}_{S_t^1} \sum_{(u_1, \cdots, u_k) \subseteq N_s} \Big[ P^{X^{u_1}_s} \big( \big\vert N_{t-s}^{A_1 + \frac{\beta}{2} t}\big\vert = 1 \big) \times \cdots \times P^{X^{u_k}_s} \big( \big\vert N_{t-s}^{- B_m - \frac{\beta}{2} t}\big\vert = 1 \big)\\
& \qquad\qquad\qquad\qquad \times \prod_{u \neq u_1, \cdots, u_k } P^{X^{u}_s} \big( \big\vert N_{t-s}^{(A + \frac{\beta}{2} t)\cup(-B - \frac{\beta}{2} t)}\big\vert = 0 \big)\Big] + \epsilon_t',
\end{align*}
where $\epsilon_t' \to 0$ $P^{x_0}$-a.s.

We have thus shown so far that 
\begin{align*}
&P^{x_0} \Big( \bigcap_{i=1}^n \big\{ \big\vert N_t^{A_i + \frac{\beta}{2} t} \big\vert = k_i \big\} \text{ ,}
\bigcap_{j=1}^{m} \big\{ \big\vert N_t^{- B_j - \frac{\beta}{2} t} \big\vert = l_j \big\} \ \Big\vert \mathcal{F}_s \Big)\\
=&\mathbf{1}_{S_t^1} \frac{1}{k_1! \cdots k_n! l_1! \cdots l_m!} \prod_{u \in N_s } P^{X^{u}_s} \big( \big\vert N_{t-s}^{(A + \frac{\beta}{2} t)\cup(-B - \frac{\beta}{2} t)}\big\vert = 0 \big)\\
\times &\sum_{(u_1, \cdots, u_k) \subseteq N_s} \Big[ \frac{P^{X^{u_1}_s} \big( \big\vert N_{t-s}^{A_1 + \frac{\beta}{2} t}\big\vert = 1 \big)}{P^{X^{u_1}_s} \big( \big\vert N_{t-s}^{(A + \frac{\beta}{2} t)\cup(-B - \frac{\beta}{2} t)}\big\vert = 0 \big)} \ 
\cdots \ \frac{P^{X^{u_k}_s} \big( \big\vert N_{t-s}^{- B_m - \frac{\beta}{2} t}\big\vert = 1 \big)}{P^{X^{u_k}_s} \big( \big\vert N_{t-s}^{(A + \frac{\beta}{2} t)\cup(-B - \frac{\beta}{2} t)}\big\vert = 0 \big)} \Big] + \epsilon_t''
\end{align*}
for some $\epsilon_t''$ such that $\epsilon_t'' \to 0$ $P^{x_0}$-a.s. To establish \eqref{eq_main_proof} we shall show that $P^{x_0}$-almost surely
\begin{equation}
\label{limit1}
\lim_{t \to \infty} \mathbf{1}_{S_t^1} \prod_{u \in N_s } P^{X^{u}_s} \big( \big\vert N_{t-s}^{(A + \frac{\beta}{2} t)\cup(-B - \frac{\beta}{2} t)}\big\vert = 0 \big) = \mathrm{e}^{-(\mu(A_1)+ \cdots + \mu(B_m))M_\infty} 
= \mathrm{e}^{-( \mu(A) + \mu(B)) M_\infty}
\end{equation}
and
\begin{align}
\label{limit2}
&\lim_{t \to \infty} \mathbf{1}_{S_t^1} \sum_{(u_1, \cdots, u_k) \subseteq N_s} \frac{P^{X^{u_1}_s} \big( \big\vert N_{t-s}^{A_1 + \frac{\beta}{2} t}\big\vert = 1 \big)}{P^{X^{u_1}_s} \big( \big\vert N_{t-s}^{(A + \frac{\beta}{2} t)\cup(-B - \frac{\beta}{2} t)}\big\vert = 0 \big)} \cdots \frac{P^{X^{u_k}_s} \big( \big\vert N_{t-s}^{- B_m - \frac{\beta}{2} t}\big\vert = 1 \big)}{P^{X^{u_k}_s} \big( \big\vert N_{t-s}^{(A + \frac{\beta}{2} t)\cup(-B - \frac{\beta}{2} t)}\big\vert = 0 \big)}\nonumber\\
= &\prod_{i = 1}^{n} \big( \mu(A_i) M_\infty \big)^{k_i} \prod_{j = 1}^{m} \big( \mu(B_j) M_\infty \big)^{l_j}.
\end{align}
Then since $\epsilon_t'' \to 0$ we will get the sought result.

\underline{Proof of \eqref{limit1}}:

From \eqref{estimate2a} and the trivial fact that $1-x \leq \exp \{-x\}$ for all $x \in \mathbb{R}$ it follows that on the event $S_t^1$
\begin{align}
\label{limit1_upper}
&\prod_{u \in N_s } P^{X^{u}_s} \big( \big\vert N_{t-s}^{(A + \frac{\beta}{2} t)\cup(-B - \frac{\beta}{2} t)}\big\vert = 0 \big)\nonumber\\
\leq &\prod_{u \in N_s} \Big( 1 - \big( \mu(A) + \mu(B) \big) \mathrm{e}^{- \beta |X^u_s| - \frac{\beta^2}{2} s} \theta_6(t) \Big)\nonumber\\
\leq &\exp \Big\{-\big( \mu(A) + \mu(B)\big) \theta_6(t) \sum_{u \in N_s}\mathrm{e}^{- \beta |X_u^s| - \frac{\beta^2}{2}s}\Big\}\nonumber\\
\to &\mathrm{e}^{-(\mu(A) + \mu(B))M_\infty} \qquad P^{x_0} \text{-a.s.}
\end{align}
as $t$ (and hence $s(t)$) $\to \infty$.

For the lower bound we use the fact that for all $x^\ast \in (0, 1)$ and $x \in [0, x^\ast)$
\[
\log(1-x) \geq \frac{\log(1-x^\ast)}{x^\ast}x
\]
and hence 
\[
1 - x \geq \exp \Big\{ \frac{\log(1-x^\ast)}{x^\ast} x\Big\}
\]
and that $\frac{\log(1 - x^\ast)}{x^\ast} \to -1$ as $x^\ast \searrow 0$. Taking $x^\ast = \big( \mu(A) + \mu(B)\big) \mathrm{e}^{-\frac{\beta^2}{2}s} \theta_5(t)$ and $x = \big( \mu(A) + \mu(B)\big) \mathrm{e}^{-\beta |X^u_s| -\frac{\beta^2}{2}s} \theta_5(t)$ we get from \eqref{estimate2} that on the event $S_t^1$
\begin{align}
\label{limit1_lower}
&\prod_{u \in N_s } P^{X^{u}_s} \big( \big\vert N_{t-s}^{(A + \frac{\beta}{2} t)\cup(-B - \frac{\beta}{2} t)}\big\vert = 0 \big)\nonumber\\
\geq &\prod_{u \in N_s} \Big( 1 - \big( \mu(A) + \mu(B) \big) \mathrm{e}^{- \beta |X^u_s| - \frac{\beta^2}{2} s} \theta_5(t) \Big)\nonumber\\
\geq &\exp \Big\{ \frac{\log \big(1 - ( \mu(A) + \mu(B)) \mathrm{e}^{-\frac{\beta^2}{2}s} \theta_5(t)\big)}{( \mu(A) + \mu(B)) \mathrm{e}^{-\frac{\beta^2}{2}s} \theta_5(t)}
\big( \mu(A) + \mu(B)\big) \theta_5(t) \sum_{u \in N_s}\mathrm{e}^{- \beta |X_u^s| - \frac{\beta^2}{2}s}\Big\}\nonumber\\
\to &\mathrm{e}^{-(\mu(A) + \mu(B))M_\infty} \qquad P^{x_0} \text{-a.s.}
\end{align}
as $t \to \infty$. Equations \eqref{limit1_upper} and \eqref{limit1_lower} together yield \eqref{limit1}.

\underline{Proof of \eqref{limit2}}:

From \eqref{estimate2} and \eqref{estimate3a} it follows that on the event $S_t^1$ 
\begin{align}
\label{limit2_upper}
&\sum_{(u_1, \cdots, u_k) \subseteq N_s} \frac{P^{X^{u_1}_s} \big( \big\vert N_{t-s}^{A_1 + \frac{\beta}{2} t}\big\vert = 1 \big)}{P^{X^{u_1}_s} \big( \big\vert N_{t-s}^{(A + \frac{\beta}{2} t)\cup(-B - \frac{\beta}{2} t)}\big\vert = 0 \big)} \cdots \frac{P^{X^{u_k}_s} \big( \big\vert N_{t-s}^{- B_m - \frac{\beta}{2} t}\big\vert = 1 \big)}{P^{X^{u_k}_s} \big( \big\vert N_{t-s}^{(A + \frac{\beta}{2} t)\cup(-B - \frac{\beta}{2} t)}\big\vert = 0 \big)}\nonumber\\
\leq &\sum_{(u_1, \cdots, u_k) \subseteq N_s} 
\frac{\mu(A_1) \mathrm{e}^{- \beta |X^{u_1}_s| - \frac{\beta^2}{2}s} \theta_8(t)}{1 - \big( \mu(A) + \mu(B)\big) \mathrm{e}^{- \frac{\beta^2}{2}s} \theta_5(t)} \cdots
\frac{\mu(B_m) \mathrm{e}^{- \beta |X^{u_k}_s| - \frac{\beta^2}{2}s} \theta_8(t)}{1 - \big( \mu(A) + \mu(B)\big) \mathrm{e}^{- \frac{\beta^2}{2}s} \theta_5(t)}\nonumber\\
\leq &\Big[ \frac{\theta_8(t)}{1 - \big( \mu(A) + \mu(B)\big) \mathrm{e}^{- \frac{\beta^2}{2}s} \theta_5(t)} \Big]^k 
\Big[ \mu(A_1) \sum_{u_1 \in N_s} \mathrm{e}^{- \beta |X^{u_1}_s| - \frac{\beta^2}{2}s} \Big] 
\times \cdots\nonumber\\
&\qquad\qquad\qquad\qquad\qquad\qquad\quad \times \Big[ \mu(B_m) \sum_{u_k \in N_s} \mathrm{e}^{- \beta |X^{u_k}_s| - \frac{\beta^2}{2}s} \Big]\nonumber\\
\to &\prod_{i = 1}^{n} \big( \mu(A_i) M_\infty \big)^{k_i} \prod_{j = 1}^{m} \big( \mu(B_j) M_\infty \big)^{l_j} \qquad P^{x_0} \text{-a.s.}
\end{align}
as $t \to \infty$. For the lower bound we notice that 
\begin{align*}
M_s^k = &\sum_{u_1, \cdots, u_k \in N_s} \Big[ \mathrm{e}^{- \beta |X^{u_1}_s| - \frac{\beta^2}{2}s} \cdots \mathrm{e}^{- \beta |X^{u_k}_s| - \frac{\beta^2}{2}s} \Big]\\
\leq &\sum_{(u_1, \cdots, u_k) \subseteq N_s} \Big[ \mathrm{e}^{- \beta |X^{u_1}_s| - \frac{\beta^2}{2}s} \cdots \mathrm{e}^{- \beta |X^{u_k}_s| - \frac{\beta^2}{2}s} \Big]\\ 
+ &\sum_{1 \leq i < j \leq k} \sum_{\substack{u_1, \cdots, u_k \in N_s:\\ u_i = u_j}} \Big[  \mathrm{e}^{- \beta |X^{u_1}_s| - \frac{\beta^2}{2}s} \cdots \mathrm{e}^{- \beta |X^{u_k}_s| - \frac{\beta^2}{2}s} \Big]\\
\leq &\sum_{(u_1, \cdots, u_k) \subseteq N_s} \Big[ \mathrm{e}^{- \beta |X^{u_1}_s| - \frac{\beta^2}{2}s} \cdots \mathrm{e}^{- \beta |X^{u_k}_s| - \frac{\beta^2}{2}s} \Big] + \binom{k}{2} \mathrm{e}^{- \frac{\beta^2}{2}s} M_s^{k-1}
\end{align*}
Then from \eqref{estimate3} we have that on the event $S_t^1$
\begin{align}
\label{limit2_lower}
&\sum_{(u_1, \cdots, u_k) \subseteq N_s} \frac{P^{X^{u_1}_s} \big( \big\vert N_{t-s}^{A_1 + \frac{\beta}{2} t}\big\vert = 1 \big)}{P^{X^{u_1}_s} \big( \big\vert N_{t-s}^{(A + \frac{\beta}{2} t)\cup(-B - \frac{\beta}{2} t)}\big\vert = 0 \big)} \cdots \frac{P^{X^{u_k}_s} \big( \big\vert N_{t-s}^{- B_m - \frac{\beta}{2} t}\big\vert = 1 \big)}{P^{X^{u_k}_s} \big( \big\vert N_{t-s}^{(A + \frac{\beta}{2} t)\cup(-B - \frac{\beta}{2} t)}\big\vert = 0 \big)}\nonumber\\
\geq &\sum_{(u_1, \cdots, u_k) \subseteq N_s} \Big[
\mu(A_1) \mathrm{e}^{- \beta |X^{u_1}_s| - \frac{\beta^2}{2}s} \theta_7(t) \Big] \cdots \Big[
\mu(B_m) \mathrm{e}^{- \beta |X^{u_k}_s| - \frac{\beta^2}{2}s} \theta_7(t) \Big]\nonumber\\
= &\theta_7(t)^k \prod_{i = 1}^{n} \big( \mu(A_i) \big)^{k_i} \prod_{j = 1}^{m} \big( \mu(B_j) \big)^{l_j}
\sum_{(u_1, \cdots, u_k) \subseteq N_s} \Big[ \mathrm{e}^{- \beta |X^{u_1}_s| - \frac{\beta^2}{2}s} \cdots \mathrm{e}^{- \beta |X^{u_k}_s| - \frac{\beta^2}{2}s} \Big]\nonumber\\
\geq &\theta_7(t)^k \prod_{i = 1}^{n} \big( \mu(A_i) \big)^{k_i} \prod_{j = 1}^{m} \big( \mu(B_j) \big)^{l_j}
\Big( M_s^k - \binom{k}{2} \mathrm{e}^{- \frac{\beta^2}{2}s} M_s^{k-1}\Big)\nonumber\\
\to &\prod_{i = 1}^{n} \big( \mu(A_i) M_\infty \big)^{k_i} \prod_{j = 1}^{m} \big( \mu(B_j) M_\infty \big)^{l_j} 
\qquad P^{x_0} \text{-a.s.}
\end{align}
as $t \to \infty$. Upper bound \eqref{limit2_upper} and lower bound \eqref{limit2_lower} together establish  \eqref{limit2}, which completes the proof of \eqref{eq_main_proof}.
\end{proof}
\subsection{Proof of Theorem \ref{main3}}
Take any $x_0 \in \mathbb{R}$, $\lambda \in (0, \frac{\beta}{2})$ and a Borel set $A$ such that $\inf A > - \infty$. Our aim is to prove  that 
\begin{equation}
\label{eq_main3_proof}
\mathrm{e}^{- \Delta_\lambda t} |N_t^{A+\lambda t}| \to \mu(A) M_\infty \qquad 
\text{ in }P^{x_0} \text{-probability}.
\end{equation}
\begin{proof}
Let us first recall that by the Markov property for any $s \in [0,t]$
\[
E^{x_0} \Big( |N_t^{A+ \lambda t}| \big\vert \mathcal{F}_s\Big) = \sum_{u \in N_s} E^{X^u_s} |N_{t-s}^{A + \lambda t}|.
\]
Now let us take $s:[0, \infty) \to [0, \infty)$ to be such that $s(t) \to \infty$ but $s(t) = o(t)$ as $t \to \infty$ and $t - s(t) \geq 0$ for all $t \geq 0$. Then from \eqref{estimate1} we get
\[
\mathrm{e}^{- \Delta_\lambda t} E^{x_0} \Big( |N_t^{A+ \lambda t}| \big\vert \mathcal{F}_s\Big) 
\leq \mu(A) \sum_{u \in N_s} \mathrm{e}^{- \beta |X^u_s| - \frac{\beta^2}{3}s} \theta_2(t)
= \mu(A) \theta_2(t) M_s.
\]
Similarly
\[
\mathrm{e}^{- \Delta_\lambda t} E^{x_0} \Big( |N_t^{A+ \lambda t}| \big\vert \mathcal{F}_s\Big) 
\geq \mu(A) \theta_1(t) M_s
\]
and hence 
\begin{equation}
\label{almost_sure}
\mathrm{e}^{- \Delta_\lambda t} E^{x_0} \Big( |N_t^{A+ \lambda t}| \big\vert \mathcal{F}_s\Big) \to \mu(A) M_\infty 
\qquad P^{x_0} \text{-a.s.}
\end{equation}
as $t \to \infty$. On the other hand, for any choice of $\epsilon > 0$ we have
\begin{align*}
&P^{x_0} \Big( \mathrm{e}^{-\Delta_\lambda t} \Big\vert |N_t^{A + \lambda t}| - E^{x_0} \big( |N_t^{A + \lambda t}| \big\vert \mathcal{F}_s\big)\Big\vert > \epsilon \Big)\\
\leq &\frac{1}{\epsilon^2} \mathrm{e}^{-2 \Delta_\lambda t} E^{x_0} \Big[ |N_t^{A + \lambda t}| - E^{x_0} \big( |N_t^{A + \lambda t}| \big\vert \mathcal{F}_s\big) \Big]^2\\
= &\frac{1}{\epsilon^2} \mathrm{e}^{-2 \Delta_\lambda t} E^{x_0} \Big[ E^{x_0} \Big( \Big(
|N_t^{A + \lambda t}| - E^{x_0} \big( |N_t^{A + \lambda t}| \big\vert \mathcal{F}_s\big)
\Big)^2 \Big\vert \mathcal{F}_s \Big) \Big]\\
= &\frac{1}{\epsilon^2} \mathrm{e}^{-2 \Delta_\lambda t} E^{x_0} \Big[ E^{x_0} \Big( |N_t^{A + \lambda t}|^2 \big\vert \mathcal{F}_s \Big) - \Big( E^{x_0} \big( |N_t^{A + \lambda t}| \big\vert 
\mathcal{F}_s \big) \Big)^2 \Big].
\end{align*}
Then by the Markov property again 
\begin{align*}
&E^{x_0} \Big( |N_t^{A + \lambda t}|^2 \big\vert \mathcal{F}_s \Big) - \Big( E^{x_0} \big( |N_t^{A + \lambda t}| 
\big\vert \mathcal{F}_s \big) \Big)^2\\
= &\Big( \sum_{u \in N_s} E^{X^u_s} \big\vert N_{t-s}^{A + \lambda t} \big\vert^2 + \sum_{\substack{u, v \in N_s\\u \neq v}} \big( E^{X^u_s} \big\vert N_{t-s}^{A + \lambda t} \big\vert E^{X^v_s} \big\vert N_{t-s}^{A + \lambda t} \big\vert \big)\Big)\\
- &\Big( \sum_{u \in N_s} \big( E^{X^u_s} \big\vert N_{t-s}^{A + \lambda t} \big\vert \big)^2 + \sum_{\substack{u, v \in N_s\\u \neq v}} \big( E^{X^u_s} \big\vert N_{t-s}^{A + \lambda t} \big\vert E^{X^v_s} \big\vert N_{t-s}^{A + \lambda t} \big\vert \big)\Big)\\
= &\sum_{u \in N_s} \Big[ E^{X^u_s} \big\vert N_{t-s}^{A + \lambda t} \big\vert^2 - \big( E^{X^u_s} \big\vert N_{t-s}^{A + \lambda t} \big\vert \big)^2 \Big]\\
\leq &\sum_{u \in N_s} E^{X^u_s} \big\vert N_{t-s}^{A + \lambda t} \big\vert^2.
\end{align*}
Thus first applying \eqref{square_estimate_eq} and after that \eqref{estimate1} we get
\begin{align}
\label{in_probability}
&P^{x_0} \Big( \mathrm{e}^{-\Delta_\lambda t} \Big\vert |N_t^{A + \lambda t}| - E^{x_0} \big( |N_t^{A + \lambda t}| \big\vert \mathcal{F}_s\big)\Big\vert > \epsilon \Big)\nonumber\\
\leq &\frac{1}{\epsilon^2} \mathrm{e}^{-2 \Delta_\lambda t} E^{x_0} \Big( \sum_{u \in N_s} \Big[ E^{X^u_s} \big\vert N_{t-s}^{A + \lambda t} \big\vert + C \mathrm{e}^{- \beta |X^u_s| - \beta^2 s + 2 \Delta_\lambda t} \Big] \Big)\nonumber\\
\leq &\frac{1}{\epsilon^2} \mathrm{e}^{-2 \Delta_\lambda t} E^{x_0} \Big( \sum_{u \in N_s} \Big[ \mu(A) \mathrm{e}^{- \beta |X^u_s| - \frac{\beta^2}{2} s + \Delta_\lambda t}\theta_2(t) + C \mathrm{e}^{- \beta |X^u_s| - \beta^2 s + 2 \Delta_\lambda t} \Big] \Big)\nonumber\\
= &\frac{1}{\epsilon^2} \mathrm{e}^{-2 \Delta_\lambda t} E^{x_0} \Big( \mu(A) \theta_2(t) \mathrm{e}^{\Delta_\lambda t}M_s + C \mathrm{e}^{- \frac{\beta^2}{2}s + 2 \Delta_\lambda t}M_s \Big)\nonumber\\
= &\frac{1}{\epsilon^2} \mu(A) \theta_2(t) \mathrm{e}^{- \Delta_\lambda t} + \frac{1}{\epsilon^2} \mathrm{e}^{- \frac{\beta^2}{2}s} \to 0
\end{align}
as $t \to \infty$. Thus 
\[
\mathrm{e}^{- \Delta_\lambda t} \Big( \big\vert N_t^{A+\lambda t} \big\vert - E^{x_0} \big( |N_t^{A+\lambda t}| 
\big\vert \mathcal{F}_s \big) \Big) \to 0 \qquad \text{ in }P^{x_0} \text{-probability},
\]
which together with \eqref{almost_sure} proves Theorem \ref{main3}
\end{proof}
\begin{Remark}
\label{almost_sure}
Note that from the above estimate, convergence in \eqref{in_probability} can be seen to hold almost surely along sequences $(t_n)_{n \geq 1}$ such that $\sum_{n \geq 1} \mathrm{e}^{- \frac{\beta^2}{2}s(t_n) < \infty}$ and hence convergence in \eqref{eq_main3_proof} holds almost surely along sequences $(t_n)_{n \geq 1}$ such that 
$\frac{t_n}{(\log n)^\alpha} \to \infty$ for $\alpha > \frac{2}{\beta^2}$.
\end{Remark}

\noindent

\end{document}